\documentclass{article}
\usepackage[paper=a4paper,left=30mm,right=30mm,top=25mm,bottom=25mm]{geometry}
\usepackage{graphicx}
\usepackage{subcaption}
\usepackage{lmodern}
\usepackage{comment}
\usepackage{dsfont}
\usepackage{pifont}
\usepackage{amsmath}
\usepackage{amsthm}
\usepackage{amssymb}
\usepackage{booktabs}
\usepackage{mathtools}
\usepackage{enumitem}
\usepackage{color}
\usepackage{csquotes}
\usepackage[numbers]{natbib}
\usepackage{hyperref}
  
\newtheorem{theorem}{Theorem}[section]
\newtheorem{optimizationproblem}[theorem]{Optimization Problem}
\newtheorem{lemma}[theorem]{Lemma}

\DeclareMathOperator{\acosh}{acosh}
\DeclareMathOperator{\clamp}{clamp}

\makeatletter
\newcommand\nopagebreakhere{\par\nobreak\@afterheading}
\makeatother

\DeclareMathOperator{\Var}{Var} 
  
\newcommand{\de}{\mathrm{\,d}}
\newcommand{\CC}{\mathcal{C}}  
\newcommand{\R}{\mathbb{R}}
\providecommand{\keywords}[1]{\textbf{Keywords } #1}
    
\newcommand{\NN}{\mathrm{N}}
\newcommand{\RR}{\mathcal{R}}

\title{\textbf{The exact region between Chatterjee's and Blest's rank correlations}}
\author{Marcus Rockel}
\date{\today}

\begin{document}

\maketitle
\begin{center}
	\small\textit{
		Department of Quantitative Finance,\\
		Institute for Economics, University of Freiburg,\\
		Rempartstr.\;16, 79098 Freiburg, Germany,\\
        \texttt{marcus.rockel@finance.uni-freiburg.de} \\[2mm]
	}
\end{center}
\begin{abstract}
Exact regions between rank correlations describe the set of all pairs of values that two dependence measures can attain simultaneously on the same copula, and thus yield sharp inequalities between them. 
In this paper, we determine the exact region between Chatterjee's rank correlation~$\xi$ and Blest's rank correlation~$\nu$ over the class of all bivariate copulas. 
Our approach is based on a constrained optimization problem whose solution is characterized by Karush--Kuhn--Tucker conditions. 
This leads to a novel extremal copula family that uniquely traces the boundary of the region. 
For this family, we derive closed-form expressions for both $\xi$ and $\nu$, which provide an explicit parametrization of the exact attainable region.
\end{abstract}
\vspace{0ex}
\keywords{copula; dependence measures; rank correlation; convex optimization; Karush--Kuhn--Tucker conditions; attainable region}

\section{Introduction}
\label{sec:xi-nu-region}

Understanding and quantifying dependence between random variables is a central theme in probability and statistics.
To this end, numerous dependence measures have been proposed, each designed to capture particular aspects of the relationship between variables.
When two such measures are considered simultaneously, a natural question is how their values are related.
A precise way to study this relationship is through exact regions between dependence measures.
These regions describe all pairs of values that the two measures can simultaneously attain when evaluated on the same copula, and thereby yield sharp inequalities between them.
Let $\CC$ denote the set of all bivariate \emph{copulas}, i.e., all distribution functions on $[0,1]^2$ with uniform marginals.
For two dependence measures
$\delta_1,\delta_2\colon\CC\rightarrow\R$,
the corresponding \emph{exact region} is defined as
\[
\RR_{\delta_1,\delta_2}
:=
\{(\delta_1(C),\delta_2(C)) : C\in\CC\}
\subseteq \R^2 .
\]
Determining this set yields sharp inequalities relating the two measures,
since its boundary characterizes the extremal dependence structures for
which the corresponding bounds are attained.
In 2017, \cite{schreyer2017exact} established the exact region between the classical measures of concordance Spearman's rho and Kendall's tau. 
Since then, attainable regions for several other dependence measures have been investigated, see, for instance, \cite{bukovvsek2022exact,bukovvsek2023exact}.

Traditional measures of concordance, such as Spearman's rho and Kendall's tau, primarily capture monotone association and treat the variables symmetrically. 
Recently, Chatterjee's rank correlation~$\xi$ has emerged as a popular alternative in the literature \cite{chatterjee2020,chatterjee2021}. Unlike classical measures, $\xi$ is an asymmetric coefficient specifically designed to quantify the strength of directed functional dependence. It takes values in $[0,1]$, where $\xi=0$ characterizes independence between $X$ and $Y$, and $\xi=1$ indicates that $Y$ is perfectly functionally dependent on $X$, i.e., $Y=f(X)$ for some Borel measurable function $f$.
For a general pair of random variables $(X,Y)$, Chatterjee's rank correlation can be written as
\begin{align}\label{eq:chatt}
\xi(Y|X)
  :=
  \frac{\int_{\R} \Var\left(P(Y \ge y \mid X)\right)\de P^Y(y)}
       {\int_{\R} \Var\left(\mathbf 1_{\{Y \ge y\}}\right)\de P^Y(y)} .
\end{align}
Intuitively, the numerator measures how strongly the conditional tail probabilities $P(Y\ge y\mid X)$ fluctuate with $X$, while the denominator captures the intrinsic variability of the tail events $\{Y\ge y\}$; their ratio therefore quantifies the fraction of the variation in $Y$ that can be explained by $X$.

In the case of continuous marginal distribution functions, Chatterjee's rank correlation does not depend on the marginals.
Throughout the paper, a (bivariate) \emph{copula} is a distribution function on $[0,1]^2$ with uniform marginals.
If $(X,Y)$ is a random vector with continuous marginal distribution functions $F_X$ and $F_Y$, then the joint distribution of $(U,V):=(F_X(X),F_Y(Y))$ is uniquely determined by a copula $C$, i.e., $(U,V)\sim C$.
In terms of the copula~$C$, Chatterjee's rank correlation can be expressed as
\begin{align}\label{eq:xi}
  \xi(C)
  :=
  6 \int_0^1\!\!\int_0^1 \bigl(\partial_1 C(t,v)\bigr)^2 \de t \de v
  - 2 .
\end{align}
In this form, the coefficient had previously been considered already in \cite{dette2013copula}.
The appeal of $\xi$ has sparked extensive research in recent years and in particular it has been compared against other established rank-based measures.
For instance, recent works have established the exact regions between $\xi$ and Spearman's rho \cite{ansari2025exact}, between $\xi$ and Spearman's footrule \cite{rockel2025exact}, as well as the exact regions between $\xi$ and other dependence measures within the specific class of lower semilinear copulas \cite{fuchs2025exact}.

\emph{Blest's rank correlation}~$\nu$ was introduced in \cite{blest2000rank} as a variant of Spearman's rho that emphasizes agreement at one end of the ranking scale.
Motivated by applications such as judged competitions, Blest proposed~$\nu$ as a refinement of Spearman's~rho that ``attaches more significance to the early ranking'' of items.
It is defined in terms of a copula~$C$ by
\begin{equation}\label{eq:nu}
  \nu(C)
  \;=\;
  24 \int_0^1 \!\!\int_0^1 (1-u)\,C(u,v)\de u\de v \;-\;2,
\end{equation}
a normalization for which $\nu(M)=1$, $\nu(W)=-1$, and $\nu(\Pi)=0$.
Trivially, $\nu$ is consistent with respect to the concordance order (i.e., $C\le D$ pointwise implies $\nu(C)\le \nu(D)$) and we say that $\nu$ belongs to the family of rank-based measures of \emph{weak concordance}, such as Spearman's rho and Kendall's tau.
Recently, the exact region between Spearman's~rho and Blest's~rank correlation within the class of extreme-value copulas was established in \cite{tschimpke2025exact}.
For every such copula~$C$, the pair $(\rho(C),\nu(C))$ satisfies the sharp inequalities
\[
  \frac{2\,\rho(C)\,\bigl(1+\rho(C)\bigr)}{3+\rho(C)}
  \;\le\;
  \nu(C)
  \;\le\;
  \frac{2\,\rho(C)\,\bigl(5-\rho(C)\bigr)}{9-\rho(C)}.
\]
The lower boundary is attained by copulas corresponding to triangular Pickands dependence functions (which characterize bivariate extreme-value copulas) that emphasize concordance in the lower tail, whereas the upper boundary arises from their mirror counterparts, which stress agreement in the upper tail.

In the present paper we determine the exact region between Chatterjee's rank
correlation $\xi$ and Blest's rank correlation $\nu$ over \emph{all} copulas, i.e., we characterize the set
\(
\RR_{\xi,\nu}
=
\{(\xi(C),\nu(C)) : C\in\CC\}.
\)
Determining this region yields sharp inequalities relating the two dependence measures.
The following theorem states the main result of the paper and is illustrated in Figure~\ref{fig:attainable_region_xi_nu}.

\begin{theorem}[Exact $(\xi,\nu)$-region in the $b$--parametrization]\label{thm:region}
Let $b>0$ and define $\Xi,\NN:(0,\infty)\to\R$ by
\begin{align}
\Xi(b)=&
\begin{cases}
\dfrac{8\,b^{2}(7-3b)}{105}, & 0<b\le 1,\\[8pt]
\dfrac{
 183\,\gamma
 -38\,b\,\gamma
 -88\,b^{2}\gamma
 +112\,b^{2}
 +48\,b^{3}\gamma
 -48\,b^{3}
 -\frac{105\acosh(\sqrt{b})}{b}
}{210}, & b>1,
\end{cases} \label{eq:Xi-formula}\\[6pt]
\NN(b)=&
\begin{cases}
\dfrac{4\,b(28-9b)}{105}, & 0<b\le 1,\\[8pt]
\dfrac{
 \frac{87\gamma}{b}
 +250\,\gamma
 -376\,b\,\gamma
 +448\,b
 +144\,b^{2}\gamma
 -144\,b^{2}
 -\frac{105\acosh(\sqrt{b})}{b^{2}}
}{420}, & b>1,
\end{cases} \label{eq:N-formula}
\end{align}
with
\(
\gamma:=\sqrt{\frac{b-1}{b}}
\)
for $b>1$.
Then the exact $(\xi,\nu)$-region is convex, closed, and satisfies
\begin{equation}\label{eq:region}
\RR_{\xi,\nu}
=\Bigl\{\,(\Xi(b),y)\in\R^2:\ -\NN(b)\le y\le \NN(b),\ b\in[0,\infty]\Bigr\},
\end{equation}
where we use the endpoint conventions $\Xi(0)=\NN(0)=0$ and $\Xi(\infty)=\NN(\infty)=1$.
Furthermore, the upper and lower curved boundary branches of $\RR_{\xi,\nu}$ are traced uniquely by the copula family $(C_b)_{b\in\R\setminus\{0\}}$ defined in~\eqref{eq:clamped} for $b>0$ and \eqref{eq:reflection} for $b<0$, while the remaining boundary is the vertical segment
\[
\{(1,y): -1\le y\le 1\}.
\]
Moreover, $C_1$ maximizes the difference $\nu-\xi$ over all bivariate copulas.
\end{theorem}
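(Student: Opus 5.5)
The plan is to describe the upper boundary as a constrained optimization over the conditional distribution functions $h_v(t):=\partial_1 C(t,v)$. For each fixed $v$, $t\mapsto h_v(t)$ takes values in $[0,1]$ and satisfies $\int_0^1 h_v(t)\de t=v$. Every admissible family (nondecreasing in $v$ for a.e.\ $t$) defines a copula through $C(u,v)=\int_0^u h_v(t)\de t$. Fubini gives
\[
\int_0^1\!\!\int_0^1 (1-u)C(u,v)\de u\de v=\int_0^1\!\!\int_0^1 \tfrac{(1-t)^2}{2}\,h_v(t)\de t\de v .
\]
Hence $\nu$ is linear in $h$ with weight $12(1-t)^2$, while $\xi$ is the quadratic functional $6\iint h^2-2$.

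\emph{Step 1: Lagrangian formulation.} For $\xi$ fixed, maximizing $\nu$ means maximizing a linear functional over a convex set cut out by a convex quadratic constraint. Equivalently, for a multiplier $b>0$, we maximize the concave functional $\iint \bigl(b\,(1-t)^2 h - \tfrac12 h^2\bigr)$, suitably normalized, subject to $0\le h\le 1$ and $\int h_v=v$.

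\emph{Step 2: KKT conditions.} The problem decouples pointwise in $v$. For each $v$ the KKT conditions give a clamped profile
\[
h_v(t)=\clamp\bigl(b(1-t)^2-\mu_b(v),0,1\bigr),
\]
where $\mu_b(v)$ is fixed by $\int h_v=v$. Since the clamped expression is nonincreasing in $\mu$, $\mu_b(v)$ is decreasing in $v$, so $h_v$ is nondecreasing in $v$ and $C_b$ is a genuine copula. I expect this to be the family referred to as \eqref{eq:clamped}. Strict concavity in $h$ gives uniqueness of the maximizer, and hence uniqueness of the boundary copula for each $b$.

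\emph{Step 3: closed forms and the full region.} Compute $\xi(C_b)$ and $\nu(C_b)$ explicitly by splitting into cases according to which clamps are active. For $b\le 1$ only the lower clamp at $0$ can bind, because $b(1-t)^2\le 1$, and the integrals are polynomial. For $b>1$ the upper clamp binds near $t=0$; the breakpoints involve $\sqrt{(\mu+1)/b}$, and integrating over $v$ produces the $\gamma$ and $\acosh\sqrt b$ terms. This yields $\Xi$ and $\NN$.

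Next show that $\Xi$ is continuous and strictly increasing from $0$ to $1$. Then $(\Xi,\NN)$ is the graph of a concave function $\xi\mapsto\NN(\Xi^{-1}(\xi))$, concave because it is the support-function value of a convex maximization.

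For the lower branch, use the reflection $C(u,v)\mapsto u-C(u,1-v)$. It preserves $\xi$ and maps $\nu$ to $-\nu$, giving \eqref{eq:reflection}.

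Convexity and fill-in of the region: $\nu$ is linear and $\xi$ is convex along mixtures, so points between the two branches can be filled. I would mix $C_b$ with its reflection. Along the path $\lambda C_b+(1-\lambda)C_b^{\mathrm{refl}}$, $\nu$ is linear in $\lambda$, while $\xi\le\Xi(b)$. A continuity argument, combined with the vertical segment at $\xi=1$ coming from mixtures of shuffles of $M$ with $\nu\in[-1,1]$, then covers every intermediate value.

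\emph{Step 4: $C_1$ maximizes $\nu-\xi$.} With the normalizations above this is the Lagrangian at the multiplier where the slope $\NN'(b)/\Xi'(b)$ equals $1$. Checking that this slope equals $1$ at $b=1$ gives the claim, and concavity of the boundary makes it a global maximum.

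The main obstacle I expect is Step 3. The piecewise integration for $b>1$ is heavy, since $\mu_b(v)$ solves a cubic-type equation in different regimes. Proving monotonicity of $\Xi$, and verifying that the fill-in argument covers all interior points, also needs care. To control the integrals, I would first try substituting $\mu$ as the integration variable in place of $v$.
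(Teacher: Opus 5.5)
Your overall plan follows the paper's. You relax away monotonicity in $v$ and solve section-wise to get $\clamp(b(1-t)^2-\mu_b(v),0,1)$, which is the paper's $h_b$ with $\mu_b=bq$. You then check monotonicity in $v$ afterwards, integrate in the section parameter instead of $v$, reflect via $C\mapsto C^{\sigma_2}$, and fill in using mixtures together with the segment at $\xi=1$. Two steps are wrong as written.

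\emph{The regime analysis for $b\le 1$.} It is not true that only the lower clamp can bind. The shift $\mu_b(v)$ becomes negative for large $v$; at $v=1$ you need $h_v\equiv 1$. Without clamps the marginal is $v=\tfrac b3-\mu_b(v)$, and the section stays below $1$ only if $\mu_b(v)\ge b-1$, i.e.\ $v\le 1-\tfrac{2b}{3}$. So the upper clamp is active on every section with $v>1-\tfrac{2b}{3}$. The paper's $b\le 1$ computation therefore uses three regimes: upper-clamped with $v=1-\tfrac{2b}{3}R^3$, unclamped, and lower-clamped. What is new for $b>1$ is only that a single section can be clamped at both ends. That double-clamped regime is what produces $\gamma$ and $\acosh\sqrt b$. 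Dropping the upper clamp for $b\le 1$ would give the wrong polynomials.

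\emph{The segment at $\xi=1$.} It cannot come from convex combinations of shuffles of $M$, because $\xi$ is strictly convex along mixtures; for example $\xi(\tfrac12M+\tfrac12W)=\tfrac14$. You need a continuous one-parameter family of shuffles, such as $(T_p(U),V)$ in Lemma~\ref{lemshuff}. With that in place your fill-in works, once the continuity step is made explicit:
\begin{itemize}
\item Mixing $C_b$ with $C_{-b}$ gives, for each $|y|\le \NN(b)$, a copula $D$ with $\nu(D)=y$ and $\xi(D)\le\Xi(b)$.
\item Mixing $D$ with the shuffle $S$ that has $\nu(S)=y$ keeps $\nu\equiv y$, while $\xi$ moves continuously from $\xi(D)$ up to $1$.
\item The intermediate value theorem then gives a copula with $(\xi,\nu)=(\Xi(b),y)$.
\end{itemize}
The paper instead starts its horizontal segment at $C_{b'}$ with $\NN(b')=y$, which requires monotonicity of $\NN$.

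\emph{Where you differ, your route is lighter.} The penalized objective equals $\tfrac{1}{12}(b\nu-\xi)$ plus a constant independent of $h$. Elementary Lagrangian sufficiency therefore shows that $C_b$ uniquely maximizes $b\nu-\xi$ over all copulas; you do not need the Banach-space KKT and quadratic-growth framework of Lemma~\ref{lem:KKT-framework}. Several consequences follow from this one fact:
\begin{itemize}
\item Concavity of $\NN\circ\Xi^{-1}$ is the concavity of the value function of a convex program. This replaces the paper's algebraically verified identity $\NN'=\Xi'/b$, which itself follows by an envelope argument.
\item At $b=1$ you get directly that $C_1$ maximizes $\nu-\xi$, with no sign analysis of $\NN'-\Xi'$.
\item The two strict Lagrangian inequalities for $b_1<b_2$ give strict monotonicity of $\NN$ and $\Xi$ without differentiating the closed forms.
\end{itemize}
Only continuity and the limits of $\Xi$ still have to be read off from the formulas.
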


\begin{figure}[t!]
\centering
\includegraphics[width=0.7\textwidth]{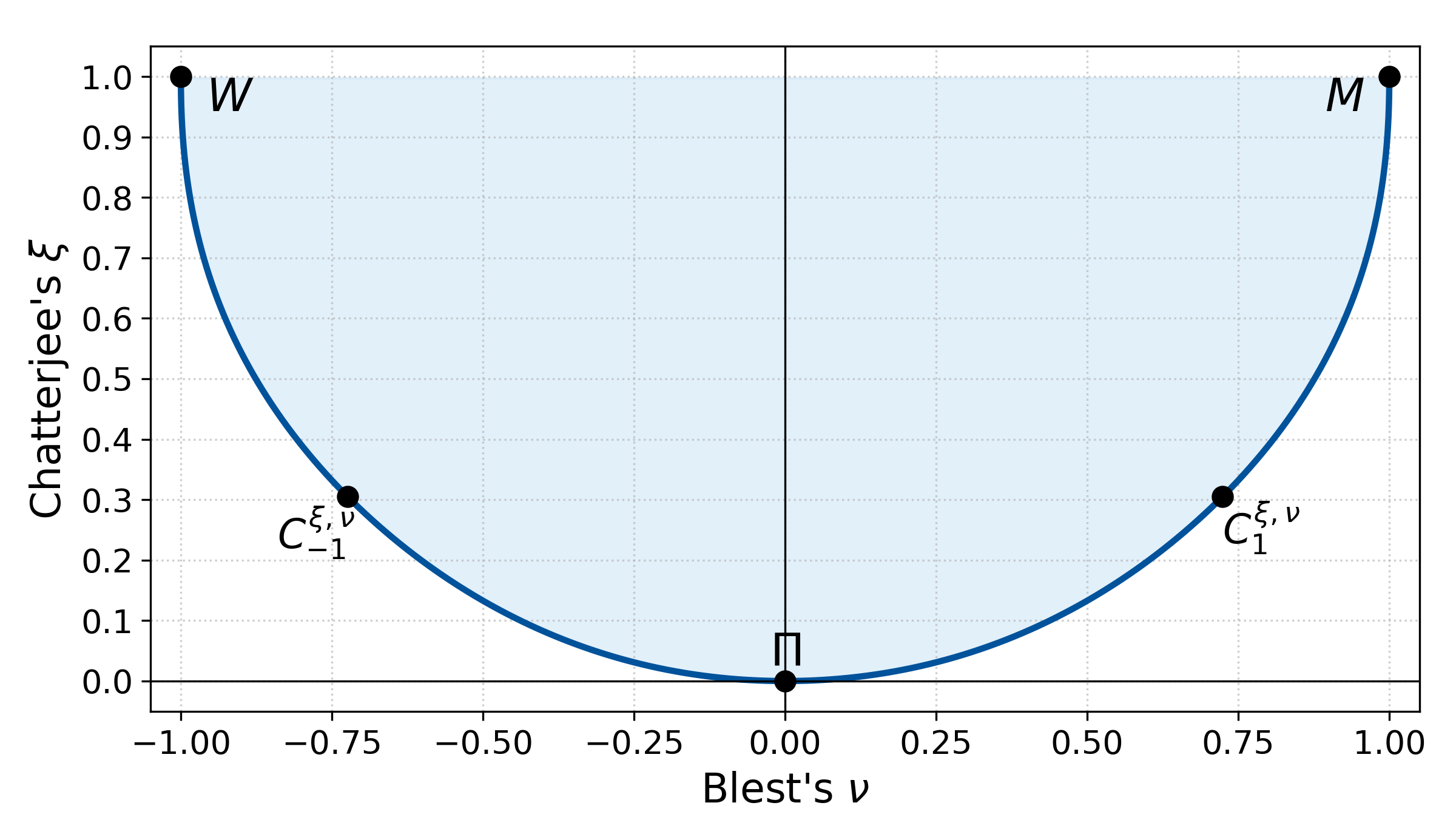}
\caption{
    The convex and closed attainable $(\xi,\nu)$-region.
    The boundary curve is uniquely traced by the copula family $(C^{\xi,\nu}_b)_{b\in\R\setminus\{0\}}$ introduced in Section~\ref{sec:novel-copula}.
    $\Pi(u,v):=uv$, $M(u,v):=\min\{u,v\}$ and $W(u,v):=\max\{u+v-1, 0\}$, for $u,v\in[0,1]$, denote the independence copula and the upper and lower Fréchet--Hoeffding copulas, respectively.
} 
\label{fig:attainable_region_xi_nu}
\end{figure}

The formulas in \eqref{eq:Xi-formula} and \eqref{eq:N-formula} are closely related.
Indeed, Lemma~\ref{lem:key-derivative} below shows that
\begin{align}\label{eq:key-derivative}
\NN'(b)=\frac{\Xi'(b)}{b}
\qquad (b>0).
\end{align}
While closed-form expressions for $\Xi$ and $\NN$ are available for all $b>0$,
no closed-form expression for the inverse of $\Xi$ appears to be available,
hence the parametrization used in~\eqref{eq:region}.

The rest of the paper is organized as follows.
Section~\ref{sec:novel-copula} constructs a novel copula family $(C_b)_{b\in\R\setminus\{0\}}$ and derives closed-form expressions for $\xi$ and $\nu$ in Theorem~\ref{thm:closed}.
Section~\ref{sec:optimization} formulates a constrained optimization problem that characterizes the upper boundary of the exact \((\xi,\nu)\)-region, and establishes that the copula family $(C_b)_{b>0}$ uniquely solves this problem, thus tracing the upper boundary of the exact \((\xi,\nu)\)-region.
All proofs are deferred to Section~\ref{sec:proofs}.

\section{\texorpdfstring
  {A novel copula family for the boundary of the $(\xi,\nu)$-region}
  {A novel copula family for the boundary of the (xi,nu)-region}}
\label{sec:novel-copula}


We begin by constructing a novel copula family $(C_b)_{b\in\R\setminus\{0\}}$ and then give explicit formulas for the corresponding values of Chatterjee's rank correlation and Blest's rank correlation. To define this family rigorously, we first note the following lemma.

\begin{lemma}
    \label{lem:unique_q}
Fix $b>0$ and, for $q\in[-1/b,1]$, define
\[
h_b^{(q)}(t)\;:=\;\clamp\!\Big(b\big((1-t)^2-q\big),\,0,\,1\Big),\qquad
\Phi(q)\;:=\;\int_0^1 h_b^{(q)}(t)\de t.
\]
Then $\Phi:[-1/b,1]\to[0,1]$ is continuous and strictly decreasing with $\Phi(-1/b)=1$ and $\Phi(1)=0$.
Consequently, for every $v\in[0,1]$ there exists a unique $q(v)\in[-1/b,1]$ such that
\begin{align}\label{eq:marginal}
\int_0^1 h_b^{(q(v))}(t)\de t\;=\;v.
\end{align}
Further, the map $(t,v)\mapsto h_b^{(q(v))}(t)$ is measurable on $[0,1]^2$.
\end{lemma}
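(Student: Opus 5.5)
The plan is to verify the three properties of $\Phi$ directly from the pointwise behaviour of $q\mapsto h_b^{(q)}(t)$, then obtain $q(v)$ by the intermediate value theorem, and finally get measurability by writing $q(v)$ as a continuous function of $v$.

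\emph{Endpoint values and continuity.} For fixed $t\in[0,1]$, the map $q\mapsto b((1-t)^2-q)$ is affine and strictly decreasing. Since $\clamp(\cdot,0,1)$ is nondecreasing and $1$-Lipschitz, we get
\[
\bigl|h_b^{(q)}(t)-h_b^{(q')}(t)\bigr|\le b|q-q'|
\]
uniformly in $t$. Integrating gives $|\Phi(q)-\Phi(q')|\le b|q-q'|$, so $\Phi$ is Lipschitz and in particular continuous. At $q=-1/b$ the argument is $b(1-t)^2+1\ge 1$, so $h\equiv 1$ and $\Phi(-1/b)=1$. At $q=1$ the argument is $b((1-t)^2-1)\le 0$, so $h\equiv 0$ and $\Phi(1)=0$. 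Since $0\le h\le 1$, $\Phi$ maps into $[0,1]$.

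\emph{Strict monotonicity.} This is the only step needing care. Pointwise monotonicity gives that $\Phi$ is nonincreasing. For strictness, take $-1/b\le q<q'\le 1$. It suffices to find a set of $t$ of positive measure on which $0<b((1-t)^2-q)$ and $b((1-t)^2-q')<1$; there $h^{(q)}(t)>h^{(q')}(t)$, because the clamp is strictly increasing on $(0,1)$ and at least one of the two arguments lies strictly inside the window, or the two arguments straddle it.

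Concretely, the set of $t$ with $(1-t)^2>q$ and $(1-t)^2<q'+1/b$ is the preimage under the continuous, strictly monotone map $t\mapsto(1-t)^2$ of the open interval $(q,\,q'+1/b)$ intersected with $[0,1]$. This interval is nonempty, since $q<q'+1/b$. It also meets $(0,1)$ in a nondegenerate interval: $q<1$ and $q'+1/b>0$, because $q'>q\ge-1/b$. Its preimage is therefore a nondegenerate interval of $t$, on which the strict inequality $h^{(q)}>h^{(q')}$ holds. Integrating gives $\Phi(q)>\Phi(q')$.

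\emph{Existence, uniqueness, and measurability.} By continuity and the endpoint values, the intermediate value theorem gives, for each $v\in[0,1]$, some $q(v)$ with $\Phi(q(v))=v$; strict monotonicity makes it unique. Thus $q=\Phi^{-1}$, where $\Phi^{-1}:[0,1]\to[-1/b,1]$ is the inverse of a continuous strictly decreasing bijection between compact intervals, and is therefore continuous.

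The map $(t,v)\mapsto b((1-t)^2-q(v))$ is then jointly continuous on $[0,1]^2$. Composing with the continuous clamp shows that $(t,v)\mapsto h_b^{(q(v))}(t)$ is continuous, hence Borel measurable.

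I expect the strict-monotonicity step to be the only real obstacle, specifically checking that the interval of $t$ where the clamp is active has positive length at both endpoint cases $q=-1/b$ and $q'=1$. The interval argument above handles these uniformly.
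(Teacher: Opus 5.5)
Your proof is correct and follows essentially the same route as the paper: evaluate $\Phi$ at the endpoints, establish continuity and strict monotonicity, invert to obtain a continuous $q=\Phi^{-1}$, and get measurability from continuity of $(t,v)\mapsto\clamp\bigl(b((1-t)^2-q(v)),0,1\bigr)$. Two of your steps are tighter than the paper's: the Lipschitz bound $|\Phi(q)-\Phi(q')|\le b|q-q'|$ replaces its dominated-convergence argument, and the explicit interval $\{t:\ q<(1-t)^2<q'+1/b\}$ proves what the paper only asserts, namely that $h_b^{(q_1)}>h_b^{(q_2)}$ on a set of positive measure.
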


With this lemma, fix now $b>0$ and let $q(v)\in[-1/b,1]$ enforce \eqref{eq:marginal}.
Define
\begin{equation}\label{eq:clamped_h}
h_b(t,v) = h^{\xi,\nu}_b(t,v):=\clamp\!\Big(b\big((1-t)^2-q(v)\big),\,0,\,1\Big),
\end{equation}
where $\clamp(y,\,\alpha,\,\beta) := \min\{\max\{y,\,\alpha\},\,\beta\}$ denotes the clamped ramp function with parameters $\alpha<\beta$.
Then, define the associated copula family $(C_b)_{b>0}$ by
\begin{equation}\label{eq:clamped}
C_b(u,v) = C^{\xi,\nu}_b(u,v):=\int_0^u h_b(t,v)\de t,
\end{equation}
so that $h_b(t,v)=\partial_1 C_b(t,v)$ is the first partial derivative of $C_b$.
The following holds.

\begin{lemma}\label{lem:cb_copula}
For every $b>0$, the function $C_b$ is a copula.
\end{lemma}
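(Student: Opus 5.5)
We will verify the copula properties of $C_b$ directly from its definition~\eqref{eq:clamped}, using the standard characterization: a function $C\colon[0,1]^2\to[0,1]$ of the form $C(u,v)=\int_0^u h(t,v)\de t$ is a copula provided that (i) $0\le h(t,v)\le 1$, (ii) $v\mapsto h(t,v)$ is nondecreasing for almost every $t$, and (iii) $\int_0^1 h(t,v)\de t=v$ for every $v$, together with measurability of $h$ so that the integrals make sense. Measurability of $(t,v)\mapsto h_b(t,v)$ and property (iii) are supplied by Lemma~\ref{lem:unique_q}, and (i) is immediate from the clamp to $[0,1]$. So the plan reduces to checking (ii) and then deducing the copula axioms.

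For (ii), we use that $\Phi$ is strictly decreasing (Lemma~\ref{lem:unique_q}). Since $v=\Phi(q(v))$, the map $v\mapsto q(v)$ is the inverse of a strictly decreasing continuous bijection $[-1/b,1]\to[0,1]$, hence it is itself strictly decreasing. For fixed $t$, the map $q\mapsto b((1-t)^2-q)$ is decreasing (as $b>0$), and the clamp is nondecreasing in its first argument. Composing these, $v\mapsto h_b(t,v)$ is nondecreasing for every $t$, not merely almost every $t$.

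Finally we verify the copula axioms. \emph{Groundedness:} $C_b(0,v)=0$ trivially. For $v=0$ we have $q(0)=1$, and then $b((1-t)^2-1)\le 0$, so $h_b(t,0)=0$ and $C_b(u,0)=0$. \emph{Margins:} $C_b(1,v)=\int_0^1 h_b(t,v)\de t=v$ by \eqref{eq:marginal}. For $v=1$ we have $q(1)=-1/b$, so $b((1-t)^2+1/b)\ge 1$, hence $h_b\equiv1$ and $C_b(u,1)=u$. \emph{2-increasingness:} for $u_1\le u_2$ and $v_1\le v_2$,
\[
C_b(u_2,v_2)-C_b(u_2,v_1)-C_b(u_1,v_2)+C_b(u_1,v_1)=\int_{u_1}^{u_2}\bigl(h_b(t,v_2)-h_b(t,v_1)\bigr)\de t\ge 0
\]
by (ii). The range lies in $[0,1]$ by (i) and (iii).

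The only potentially delicate point is the monotonicity of $q(\cdot)$, which rests entirely on the strict monotonicity and continuity of $\Phi$ asserted in Lemma~\ref{lem:unique_q}. Given that lemma, no genuine obstacle remains.
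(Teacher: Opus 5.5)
Your proof is correct and follows the paper's argument. Measurability and the marginal condition come from Lemma~\ref{lem:unique_q}, and monotonicity in $v$ comes from composing the decreasing inverse $q=\Phi^{-1}$ with the non-increasing map $q\mapsto\clamp\!\big(b((1-t)^2-q),0,1\big)$. The one difference is that you check groundedness, margins and 2-increasingness by hand, whereas the paper cites the characterization in Lemma~\ref{charSIcop}; this changes nothing of substance.
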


Like the partial derivative $h_b$, the family $(C_b)_{b>0}$ can also be written in explicit piecewise form.
However, the resulting expression is less transparent and does not provide additional insight into the exact $(\xi,\nu)$-region, so we omit it here.
A visualization of the conditional distributions $h_b(\cdot,v)$ for different values of $b$ is given in Figure~\ref{fig:cvxpy_xi_nu}.

\begin{figure}[ht]
    \centering
    \includegraphics[width=0.32\textwidth, trim=40 20 40 20, clip]{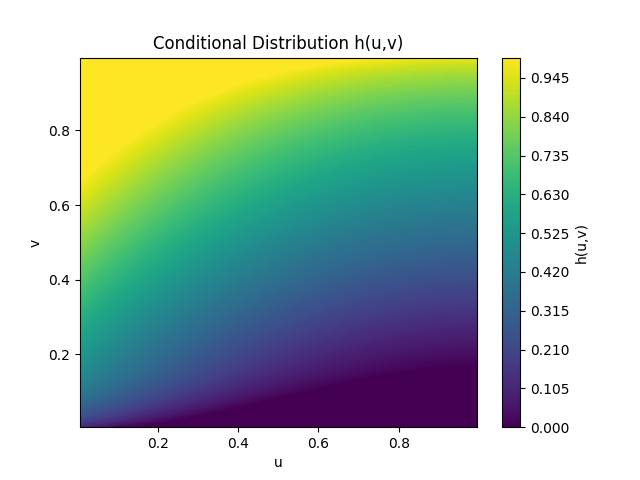}
    \includegraphics[width=0.32\textwidth, trim=40 20 40 20, clip]{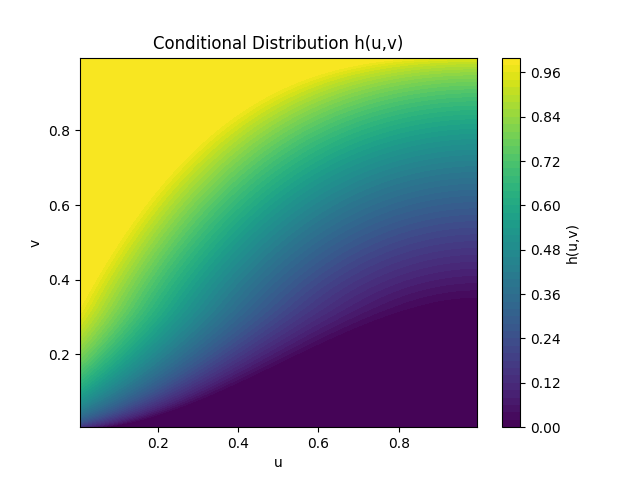}
    \includegraphics[width=0.32\textwidth, trim=40 20 40 20, clip]{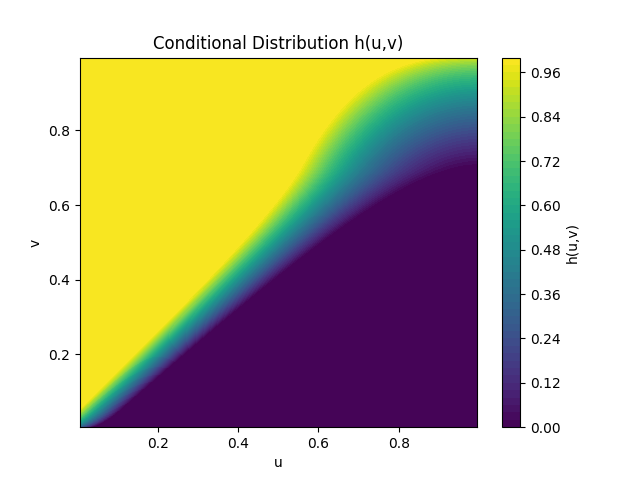}
    \caption{
        Conditional distribution plots for the copula $C_b$ defined in \eqref{eq:clamped} with $b=0.5$ (left), $b=1$ (middle), and $b=5$ (right).
        A large value of Blest's rank correlation for a fixed value of Chatterjee's rank correlation is achieved by allocating the functional dependence more heavily towards the top ranks (left side of each plot).
    }
    \label{fig:cvxpy_xi_nu}
\end{figure}

We will see in Section~\ref{sec:optimization} that the copula family $(C_b)_{b>0}$ defined in \eqref{eq:clamped} uniquely solves the optimization problem of maximizing $\nu$ at fixed $\xi$ in Theorem~\ref{thm:KKT}, and hence traces the upper boundary of the exact $(\xi,\nu)$-region from Theorem~\ref{thm:region}.
However, to explicitly state the boundaries of the exact $(\xi,\nu)$-region, we need closed-form expressions for $\xi(C_b)$ and $\nu(C_b)$, which we give here.

\begin{theorem}[Closed-form \(\xi\) and \(\nu\) for $C_b$]\label{thm:closed}
For \(b>0\), one has
\begin{align}\label{eq:xi_nu_formulas}
\xi(C_b) = \Xi(b),\qquad \nu(C_b) = \NN(b),
\end{align}
where $\Xi$ and $\NN$ are defined in \eqref{eq:Xi-formula} and \eqref{eq:N-formula}, respectively.
The two cases in \eqref{eq:Xi-formula} and \eqref{eq:N-formula} agree at \(b=1\), where \(\xi(C_1)=\frac{32}{105}\) and \(\nu(C_1)=\frac{76}{105}\).
\end{theorem}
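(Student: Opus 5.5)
We will compute both coefficients by reducing them to one-dimensional integrals in the level parameter $q$, using the change of variables $v=\Phi(q)$ from Lemma~\ref{lem:unique_q}. First we simplify the functionals. For $\nu$, we insert $C_b(u,v)=\int_0^u h_b(t,v)\de t$ into \eqref{eq:nu} and apply Fubini. Since $\int_t^1(1-u)\de u=(1-t)^2/2$, this gives
\[
\nu(C_b)=12\int_0^1\!\!\int_0^1 (1-t)^2\,h_b(t,v)\de t\de v-2 .
\]
For $\xi$ we use \eqref{eq:xi} directly with $\partial_1 C_b=h_b$. We then substitute $s=1-t$, so that $h_b=\clamp(b(s^2-q),0,1)$. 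For fixed $q$, the function is $0$ for $s^2\le q$ and $1$ for $s^2\ge q+1/b$, and it is linear in $s^2$ in between. Hence the inner integrals $A(q):=\int_0^1 h^2\de s$ and $B(q):=\int_0^1 s^2 h\de s$, as well as $\Phi(q)$, are explicit polynomials in $q$, $\sqrt{\max(q,0)}$ and $\sqrt{\min(q+1/b,1)}$.

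Next we change variables. By Lemma~\ref{lem:unique_q}, $v=\Phi(q)$ is a strictly decreasing bijection of $[-1/b,1]$ onto $[0,1]$. It is piecewise $C^1$, with $\Phi'(q)=-b\bigl(\sqrt{\min(q+1/b,1)}-\sqrt{\max(q,0)}\bigr)$. Therefore
\[
\xi(C_b)=6\int_{-1/b}^{1}A(q)\,|\Phi'(q)|\de q-2,\qquad \nu(C_b)=12\int_{-1/b}^{1}B(q)\,|\Phi'(q)|\de q-2 .
\]
The interval $[-1/b,1]$ splits at the two breakpoints $q=0$ and $q=1-1/b$, where the lower and upper clamps switch on or off. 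Their order depends on whether $b\le1$ (then $1-1/b\le 0$) or $b>1$ (then $0<1-1/b<1$), and this produces the two cases in \eqref{eq:Xi-formula} and \eqref{eq:N-formula}.

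For $0<b\le1$, the case split is $[-1/b,1-1/b]$, $[1-1/b,0]$, $[0,1]$. On each piece the integrands are polynomials in $q$ and in $\sqrt{q}$ or $\sqrt{q+1/b}$. Substituting $q=r^2$ or $q+1/b=r^2$ gives polynomial integrals that should collapse to $8b^2(7-3b)/105$ and $4b(28-9b)/105$.

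For $b>1$, the middle piece $[0,1-1/b]$ contains the product $\sqrt{q+1/b}\,\sqrt{q}$. Integrating it generates $\sqrt{(b-1)/b}=\gamma$ at the endpoint $q=1-1/b$ and a logarithmic term $\log(\sqrt b+\sqrt{b-1})=\acosh(\sqrt b)$. This is the main obstacle: the bookkeeping there is heavy. To keep it manageable, we would compute only $\Xi$ in full. We would then verify $\NN$ via the identity \eqref{eq:key-derivative}, $\NN'(b)=\Xi'(b)/b$, together with the normalization $\NN(b)\to0$ as $b\downarrow0$ and continuity at $b=1$. If that identity is only proved later, we would instead check it by differentiating under the integral: by the first-order condition defining $h_b$, the $b$-derivatives of the two functionals are proportional with factor $b$.

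Finally, we confirm continuity at $b=1$. There $\gamma=0$ and $\acosh(1)=0$, so both cases give $\Xi(1)=32/105$ and $\NN(1)=76/105$, as stated in Theorem~\ref{thm:closed}.
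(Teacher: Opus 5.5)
Your proposal is correct. For $\xi$, and for $\nu$ with $0<b\le 1$, it follows the paper's route: reduce to a $q$-integral against $-v'(q)$ and split at $q=0$ and $q=1-1/b$. Your unified formula $\Phi'(q)=-b\bigl(\min\{R,1\}-X_s(q)\bigr)$ slightly simplifies Lemma~\ref{lem:vq}; for instance, the paper's double-clamped expression $-\frac{1}{2R}\bigl(1+b(R-r)^2\bigr)$ equals $-b(R-r)$ because $b(R^2-r^2)=1$.

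The genuine difference is how you get $\NN$ for $b>1$. The paper integrates the $\nu$-integrands of the double- and lower-clamped regimes explicitly, and only afterwards derives Lemma~\ref{lem:key-derivative} from the finished closed forms. So the first option in your hedge would be circular, and your fallback is the argument you actually need.

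That fallback does work. On $\{0<h_b<1\}$, which carries $\partial_b h_b$, one has $(1-t)^2=q_b(v)+h_b/b$. Differentiating the marginal constraint gives $\int_0^1\partial_b h_b(t,v)\de t=0$. Hence $\int_0^1\!\int_0^1(1-t)^2\,\partial_b h_b=\frac1b\int_0^1\!\int_0^1 h_b\,\partial_b h_b$, i.e.\ $\frac{d}{db}\nu(C_b)=\frac1b\frac{d}{db}\xi(C_b)$.

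The regularity this needs is mild. Apply the implicit function theorem to $\Phi(q;b)=v$. Since $0<s^2-q<1/b$ on the linear part, you get $0<\partial_b q_b\le 1/b^2$, hence $|\partial_b h_b|\le 1/b$, which justifies dominated convergence.

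You must still check algebraically that the stated $\NN$ satisfies $\NN'=\Xi'/b$ on $(1,\infty)$ and $\NN(1)=76/105$. This is exactly the computation in the paper's Lemma~\ref{lem:key-derivative}. The $\acosh$ bookkeeping for $\Xi$ itself also still has to be done in full.

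What your route buys: it skips the heaviest integrals ($I^{(iii)}_{\nu,B}$, $I^{(iv)}_{\nu,B}$). It also explains $\NN'=\Xi'/b$ structurally, as an envelope identity for the multiplier $\mu=1/b$, which is why the upper boundary has slope $1/b$ and is concave. The paper's brute-force route instead gives explicit regime-by-regime values for both coefficients, and these serve as an independent cross-check of the closed forms.
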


We next note how Blest's rank correlation behaves under the reflections
\begin{align}\label{itm:concordance_reflections}
C^{\sigma_1}(u,v) &:= v - C(1-u,v),
\qquad C^{\sigma_2}(u,v) := u - C(u,1-v).
\end{align}
The first reflection $C^{\sigma_1}(u,v)$ corresponds to reversing the $X$-ranks, i.e.\ to the transformation $(X,Y)\mapsto(1-X,Y)$, but in general
\[
\nu(C^{\sigma_1})\ne-\nu(C),
\]
see \cite[Exa.~4]{genest2003blests}.
By contrast, for the second reflection
\[
C^{\sigma_2}(u,v)=u-C(u,1-v),
\]
which corresponds to reversing the $Y$-ranks, i.e.\ to the transformation $(X,Y)\mapsto(X,1-Y)$, it does hold that
\begin{equation}\label{eq:nu-sigma2}
\nu(C^{\sigma_2})=-\nu(C),
\end{equation}
see \cite{tschimpke2025exact}.
For our purposes, the lower-orthant ordering property
\[
D(u,v)\le E(u,v)\ \text{ for all }(u,v)\in[0,1]^2\implies \nu(D)\le \nu(E)
\]
and the reflection property \eqref{eq:nu-sigma2} are useful, as they allow us to deduce the lower boundary of the $(\xi,\nu)$-region from the upper one by reflection, as the transformation $Y\mapsto 1-Y$ preserves the variances in \eqref{eq:chatt} and hence $\xi$.
Indeed, one has
\[
\partial_1 C^{\sigma_2}(u,v) = 1 - \partial_1 C(u,1-v),
\]
and using a change of variables $v\mapsto 1-v$ and $\int_0^1\int_0^1 \partial_1 C(u,v)\de u\de v=1/2$, it follows from \eqref{eq:xi} that
\begin{align*}
\xi(C^{\sigma_2})
~ &= ~ 6 \int_0^1 \!\!\int_0^1 \bigl(1 - \partial_1 C(u,1-v)\bigr)^2 \de u \de v - 2 \\[4pt]
&= ~ 6 \Biggl( 1 - 2 \!\int_0^1 \!\!\int_0^1 \partial_1 C(u,v) \de u \de v
   + \!\int_0^1 \!\!\int_0^1 \bigl(\partial_1 C(u,v)\bigr)^2 \de u \de v \Biggr) - 2 \\[4pt]
&= ~ 6 \!\int_0^1 \!\!\int_0^1 \bigl(\partial_1 C(u,v)\bigr)^2 \de u \de v - 2
~=~\xi(C).
\end{align*}
Therefore, \eqref{eq:nu-sigma2} implies that the reflection $C\mapsto C^{\sigma_2}$ leaves $\xi$ invariant while flipping the sign of $\nu$.
Hence, the exact $(\xi,\nu)$-region is symmetric about the $\nu=0$ axis and the copula family $(C_b)_{b>0}$ is extended to trace the lower boundary of the $(\xi,\nu)$-region via the reflection $C\mapsto C^{\sigma_2}$.
More precisely, for $b<0$, define
\begin{align}\label{eq:reflection}
C_b(u,v):=u-C_{-b}(u,1-v),
\end{align}
so $C_b=C_{-b}^{\sigma_2}$.
With this extension, the notation \(C_b\) is now defined for all \(b\in\R\setminus\{0\}\), where \(b>0\) traces the upper boundary and \(b<0\) traces the lower boundary of the exact \((\xi,\nu)\)-region.
In particular, $C_b$ is a copula for every $b\ne 0$ and we obtain a copula family $(C_b)_{b\in\R\setminus\{0\}}$.
Therefore, to determine the exact \((\xi,\nu)\)-region, it suffices to determine the upper boundary, i.e.\ to maximize \(\nu\) at fixed \(\xi\); the lower boundary then follows by reflection.

\section{\texorpdfstring{Maximizing $\nu$ at fixed $\xi$}{Maximizing nu at fixed xi}}
\label{sec:optimization}

Our approach is based on expressing both rank correlations as functionals of the partial derivative $h_v(t) := \partial_1 C(t,v)$ of the copula.
Using this representation, the problem of determining the boundary of the $(\xi,\nu)$-region can be reformulated as an optimization problem over admissible families $(h_v)_{v\in[0,1]}$ satisfying the structural constraints that characterise copula derivatives.

Before turning to the optimization problem of maximizing $\nu$ at fixed $\xi$ in Theorem~\ref{thm:KKT}, we fix notation for the Banach-space optimization framework that will be used below. Our setup follows \cite[App.~A]{rockel2025exact}, which in turn is based on \cite[Ch.~3]{bonnans2013perturbation}. We briefly recall the main elements of this framework, which allows us to apply the method of Lagrange multipliers in infinite-dimensional spaces and to derive the necessary optimality conditions for our optimization problem.
More precisely, we consider optimization problems of the form
\begin{align}\label{eq:p}
  \min_{x\in X} f(x)
  \quad\text{subject to}\quad
  G(x)\in K,
\end{align}
where $X$ and $Y$ are Banach spaces, $f:X\to\R$ and $G:X\to Y$ are twice continuously Fr\'echet differentiable, and $K\subseteq Y$ is a non-empty closed convex set. We write
\[
  \mathcal{L}(x,a,\lambda)
  := a\,f(x)+\langle \lambda,G(x)\rangle,
  \qquad (x,a,\lambda)\in X\times\R\times Y^*,
\]
for the generalized Lagrangian and use the notation from \cite[App.~A]{rockel2025exact} throughout.

\begin{lemma}[KKT framework, \cite{bonnans2013perturbation,rockel2025exact}]\label{lem:KKT-framework}
Consider the optimization problem \eqref{eq:p} and let $x_0\in X$ be a feasible point, i.e.\ $G(x_0)\in K$. Then the following statements hold.

\begin{enumerate}[label=(\roman*)]
\item \label{itm:first-order-optimality}\emph{First-order optimality (KKT conditions).}
If $(a,\lambda)\in \R\times Y^*$ is a generalized Lagrange multiplier at $x_0$ with $a>0$, then after normalization $a=1$ the pair $(1,\lambda)$ satisfies
\[
D_x\mathcal{L}(x_0,1,\lambda)=0,
\qquad
\lambda\in N_K(G(x_0)),
\]
where
\(
N_K(y)
:=
\{\lambda\in Y^*:\langle \lambda,z-y\rangle\le0\ \text{for all } z\in K\}
\)
denotes the normal cone of $K$ at $y\in K$.

\item \label{itm:quadratic-growth}\emph{Quadratic growth.}
Let $\Lambda(x_0)$ denote the set of all multipliers $\lambda$ satisfying the KKT conditions above. 
If there exists $\beta>0$ such that
\[
\sup_{\lambda\in\Lambda(x_0)}
D^2_{xx}\mathcal{L}(x_0,1,\lambda)(k,k)
\ge \beta\|k\|^2
\qquad
\forall k\in X,
\]
then $x_0$ is a strict local minimizer of \eqref{eq:p}, i.e.\ there exist $C>0$ and a neighborhood $N_0$ of $x_0$ such that
\(
f(x)\ge f(x_0)+C\|x-x_0\|^2
\)
for all feasible $x\in N_0$.

\item \label{itm:convex-case}\emph{Convex case.}
If, in addition to the assumptions of \ref{itm:quadratic-growth}, $f$ and the feasible set are convex, then the quadratic growth condition implies that $x_0$ is the unique global minimizer of \eqref{eq:p}.
\end{enumerate}
\end{lemma}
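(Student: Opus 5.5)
The plan is to derive the three items in turn from the definitions in \cite[Ch.~3]{bonnans2013perturbation}, with the cited results serving as a cross-check; nothing from the present paper is needed. Items \ref{itm:first-order-optimality} and \ref{itm:convex-case} are elementary, and all the real work is in item \ref{itm:quadratic-growth}.

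For \ref{itm:first-order-optimality}, I would recall the definition of a generalized (Fritz John) Lagrange multiplier at a feasible $x_0$. It is a pair $(a,\lambda)\neq 0$ with $a\ge 0$ such that
\[
D_x\mathcal L(x_0,a,\lambda)=a\,Df(x_0)+\lambda\circ DG(x_0)=0
\qquad\text{and}\qquad
\lambda\in N_K(G(x_0)).
\]
If $a>0$, I divide by $a$. Linearity of $D_x\mathcal L$ in $(a,\lambda)$ gives $D_x\mathcal L(x_0,1,\lambda/a)=0$. Since $N_K(G(x_0))$ is a cone, $\lambda/a\in N_K(G(x_0))$ as well. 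This is exactly the normalized KKT system.

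For \ref{itm:quadratic-growth}, I would compare $f$ with the Lagrangian. Take a feasible $x$ near $x_0$, put $k=x-x_0$, and pick any $\lambda\in\Lambda(x_0)$. Because $G(x)\in K$ and $\lambda\in N_K(G(x_0))$, we get $\langle\lambda,G(x)\rangle\le\langle\lambda,G(x_0)\rangle$, and therefore
\[
f(x)-f(x_0)\;\ge\;\mathcal L(x,1,\lambda)-\mathcal L(x_0,1,\lambda).
\]
I then Taylor-expand the right-hand side to second order. The first-order term vanishes by the KKT condition, which leaves
\[
f(x)-f(x_0)\;\ge\;\tfrac12 D^2_{xx}\mathcal L(x_0,1,\lambda)(k,k)+r_\lambda(k),
\]
where $|r_\lambda(k)|\le \omega(\|k\|)(1+\|\lambda\|)\|k\|^2$ and $\omega(s)\to 0$ as $s\to 0$. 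Such an $\omega$ exists because $f$ and $G$ are $C^2$, so their second derivatives are continuous at $x_0$.

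For each $k$, the hypothesis lets me choose $\lambda_k\in\Lambda(x_0)$ with $D^2_{xx}\mathcal L(x_0,1,\lambda_k)(k,k)\ge\tfrac{\beta}{2}\|k\|^2$. Absorbing the remainder then gives $f(x)\ge f(x_0)+\tfrac{\beta}{8}\|k\|^2$ once $\|k\|$ is small enough.

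The main obstacle is making this uniform in $k$: the remainder grows with $\|\lambda_k\|$. I would handle it in one of two ways.
\begin{itemize}
\item Use that $\Lambda(x_0)$ is bounded (indeed weak$^*$ compact) when Robinson's constraint qualification holds, which is the setting of \cite[Thm.~3.63, Prop.~3.17]{bonnans2013perturbation}.
\item More simply for our application, restrict the supremum to a fixed bounded subset of multipliers, or to a single multiplier. This suffices because the condition is imposed on all of $X$ rather than only on the critical cone, so one multiplier already delivers it.
\end{itemize}

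For \ref{itm:convex-case}, suppose $f$ and the feasible set are convex. A local minimizer of a convex function over a convex set is global: if $f(x_1)<f(x_0)$ for some feasible $x_1$, then the points $x_0+s(x_1-x_0)$ are feasible, lie in $N_0$ for small $s>0$, and have value strictly below $f(x_0)$, a contradiction. For uniqueness, suppose $x_1\neq x_0$ is another global minimizer. Then $f$ equals $f(x_0)$ along the whole segment $[x_0,x_1]$, since by convexity it is at most $f(x_0)$ there and by global optimality at least $f(x_0)$. Points of that segment close to $x_0$ then violate the strict quadratic growth $f(x)\ge f(x_0)+C\|x-x_0\|^2>f(x_0)$ from \ref{itm:quadratic-growth}. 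Hence $x_0$ is the unique global minimizer.
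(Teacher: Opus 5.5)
Your proofs of (i) and (iii) are correct, and you identified the right difficulty in (ii). But neither of your remedies proves (ii) \emph{as stated}, and no argument can. With an unrestricted supremum over $\Lambda(x_0)$ and no constraint qualification, (ii) is false in infinite dimensions.

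Take $X=\ell^2$, $Y=\ell^1$, $K=\{y\in\ell^1: y_n\le 0 \text{ for all } n\}$, $f(x)=-\|x\|^2$, $G(x)=(x_n^2/n-x_n^4)_{n\ge 1}$, and $x_0=0$. Here $G$ is a continuous polynomial map $\ell^2\to\ell^1$, hence $C^2$. Since $Df(0)=0$ and $DG(0)=0$, one gets $\Lambda(0)=\{\lambda\in\ell^\infty:\lambda_n\ge 0 \text{ for all } n\}$ and $D^2_{xx}\mathcal{L}(0,1,\lambda)(k,k)=-2\|k\|^2+2\sum_n \lambda_n k_n^2/n$. The supremum of this over $\Lambda(0)$ is $+\infty$ for every $k\neq 0$. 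Yet $x^{(m)}:=m^{-1/2}e_m$ satisfies $G(x^{(m)})=0\in K$, $x^{(m)}\to 0$ and $f(x^{(m)})=-1/m<f(0)$, so $0$ is not even a local minimizer.

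Hence an extra hypothesis is necessary, not merely convenient. Your option (a) supplies one: boundedness of $\Lambda(x_0)$, e.g.\ from Robinson's condition, which fails here because $-K$ has empty interior in $\ell^1$. Taking the supremum over a fixed bounded set of multipliers also works. This is in effect what the normalized generalized multipliers in \cite{bonnans2013perturbation} provide, and with it your Taylor estimate closes the argument. The paper itself offers no proof, only the citations. In finite dimensions (ii) as written does hold: fix one multiplier chosen for a limit direction of $(x_j-x_0)/\|x_j-x_0\|$.

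Your justification for option (b), that ``one multiplier already delivers it'' because the condition is imposed on all of $X$, is wrong in general. A supremum taken separately for each $k$ does not produce one multiplier that works for all $k$. In the example, every fixed $\lambda\in\Lambda(0)$ gives $D^2_{xx}\mathcal{L}(0,1,\lambda)(e_n,e_n)=-2+2\lambda_n/n<0$ for large $n$.

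What is true is that in the paper's only use of the lemma, the proof of Theorem~\ref{thm:KKT}, a single multiplier works with $D^2_{hh}\mathcal{L}=2\mu_c\|k\|^2$. Since that Lagrangian is exactly quadratic, your comparison inequality yields $f(h)-f(h_{\mu_c})\ge \mathcal{L}(h)-\mathcal{L}(h_{\mu_c})=\mu_c\|h-h_{\mu_c}\|^2$ for \emph{all} feasible $h$. This gives global growth and uniqueness with no remainder term.

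Likewise, the conclusion of (iii) survives without bounded multipliers. Convexity of the feasible set and the KKT conditions give $Df(x_0)(x-x_0)\ge 0$ for feasible $x$, hence global optimality by convexity of $f$. Now suppose $x_1\neq x_0$ were another minimizer, and set $d=x_1-x_0$. Then $f$ is constant on $[x_0,x_1]$. Meanwhile $s\mapsto\langle\lambda,G(x_0+sd)\rangle$ has a one-sided maximum at $s=0$ with zero derivative. This forces $D^2_{xx}\mathcal{L}(x_0,1,\lambda)(d,d)\le 0$ for every $\lambda\in\Lambda(x_0)$, contradicting the hypothesis of (ii).
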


The following characterization of bivariate copulas will be useful in the sequel, as it allows us to represent a copula via a family of functions describing its partial derivatives.

\begin{lemma}[A characterization of copulas, \cite{ansari2025exact}]
\label{charSIcop}
A function \(C\colon [0,1]^2 \to [0,1]\) is a bivariate copula if and only if there exists a family \((h_v)_{v\in [0,1]}\) of measurable functions \(h_v\colon [0,1]\to [0,1]\) such that
\begin{enumerate}[label=(\roman*)]
\item \label{charSIcop1} \(C(u,v) = \int_0^u h_v(t) \de t\) for all \((u,v)\in [0,1]^2\),
\item \label{charSIcop3} \(\int_0^1 h_v(t) \de t = v\) for all \(v\in [0,1]\),
\item \label{charSIcop2} \(h_v(t)\) is non-decreasing in \(v\) for all \(t\in [0,1]\).
\end{enumerate}
\end{lemma}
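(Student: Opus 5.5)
The statement is cited from \cite{ansari2025exact}, so I would prove it directly from the definition of a copula: groundedness, uniform margins, and 2-increasingness.

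\emph{Sufficiency.} Suppose a family $(h_v)$ satisfies \ref{charSIcop1}--\ref{charSIcop2}.
\begin{itemize}
\item Groundedness in $u$: $C(0,v)=0$ is immediate from \ref{charSIcop1}.
\item Margin in $u$: $C(1,v)=v$ is exactly \ref{charSIcop3}.
\item Groundedness in $v$: by \ref{charSIcop3}, $\int_0^1 h_0=0$. Since $h_0\ge 0$, we get $h_0=0$ a.e., hence $C(u,0)=0$.
\item Margin in $v$: similarly $\int_0^1(1-h_1)=0$ with $1-h_1\ge0$, so $h_1=1$ a.e. and $C(u,1)=u$.
\item 2-increasingness: for $u_1\le u_2$ and $v_1\le v_2$,
\[
C(u_2,v_2)-C(u_1,v_2)-C(u_2,v_1)+C(u_1,v_1)=\int_{u_1}^{u_2}\bigl(h_{v_2}(t)-h_{v_1}(t)\bigr)\de t\ge 0
\]
by \ref{charSIcop2}.
\end{itemize}
The range $[0,1]$ then follows from the copula properties.

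\emph{Necessity.} Let $C$ be a copula. For fixed $v$, the map $u\mapsto C(u,v)$ is Lipschitz with constant $1$, so it is absolutely continuous and differentiable a.e. The only nontrivial point is to choose a version of the derivative that is jointly monotone in $v$ for \emph{every} $t$, not just a.e.

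To do this, take the upper Dini derivative
\[
h_v(t):=\limsup_{n\to\infty} n\bigl(C(\min\{t+1/n,1\},v)-C(t,v)\bigr)\quad\text{for } t<1,
\]
and set $h_v(1)$ by the analogous left quotient, e.g.\ $h_v(1):=\limsup_{n} n\bigl(C(1,v)-C(1-1/n,v)\bigr)$.

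\begin{itemize}
\item Measurability: $h_v$ is a countable $\limsup$ of continuous functions.
\item Range: each difference quotient lies in $[0,1]$. The lower bound is monotonicity of $C$ in $u$, and the upper bound is the Lipschitz bound. Hence $h_v(t)\in[0,1]$.
\item Monotonicity in $v$: 2-increasingness gives that each quotient is non-decreasing in $v$ for every fixed $t$ and $n$. Taking $\limsup$ preserves this, so \ref{charSIcop2} holds everywhere.
\item Property \ref{charSIcop1}: by absolute continuity, $h_v$ coincides a.e.\ with $\partial_1C(\cdot,v)$. The fundamental theorem of calculus for absolutely continuous functions then gives $C(u,v)=\int_0^u h_v(t)\de t$.
\item Property \ref{charSIcop3}: evaluate \ref{charSIcop1} at $u=1$ and use $C(1,v)=v$.
\end{itemize}

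The main obstacle is necessity. An arbitrary a.e.\ version of $\partial_1C$ need not be monotone in $v$ pointwise, because the exceptional null sets depend on $v$ and there are uncountably many $v$. Fixing a single canonical choice, the limsup of difference quotients along the fixed sequence $1/n$, resolves this, since the monotonicity then holds quotient by quotient.
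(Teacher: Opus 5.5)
Your proof is correct and complete. The paper gives no proof of this lemma (it is quoted from \cite{ansari2025exact}), so there is no in-paper argument to compare against.

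Sufficiency is the routine check you give. For necessity you isolate and resolve the one real difficulty. The exceptional null sets of $\partial_1 C(\cdot,v)$ depend on $v$, so an arbitrary version need not be monotone in $v$ at every $t$. Your $\limsup$ of right difference quotients along the fixed sequence $1/n$ avoids this: it inherits monotonicity in $v$ from 2-increasingness quotient by quotient. It still equals $\partial_1 C(\cdot,v)$ a.e.\ by Lebesgue's differentiation theorem for the $1$-Lipschitz map $u\mapsto C(u,v)$.

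A common alternative is probabilistic. For $(U,V)\sim C$, take a regular conditional distribution $K$ of $V$ given $U$ and set $h_v(t):=K(t,[0,v])$. Then (iii) holds for every $t$ because $K(t,\cdot)$ is a probability measure, and (i)--(ii) follow from $C(u,v)=\int_0^u K(t,[0,v])\de t$ and $C(1,v)=v$. That route gives the extra structure that each $v\mapsto h_v(t)$ is a distribution function, in particular right-continuous. The price is invoking the existence of regular conditional distributions.

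Your route is purely real-analytic. It also gives a bonus: every difference quotient is jointly continuous in $(t,v)$, so $(t,v)\mapsto h_v(t)$ is jointly Borel. The paper tacitly needs exactly this when it treats $h$ as an element of $L^2((0,1)^2)$ and applies Fubini in \eqref{eq:intro-nu-forms}, although the lemma as stated only asserts measurability of each section $h_v$.
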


Lemma~\ref{charSIcop} allows us to represent a copula in terms of its partial derivative with respect to the first coordinate.
By \eqref{eq:xi}, Chatterjee's rank correlation is already expressed in terms of the first partial derivative of the copula.
To formulate the optimization problem in the same variables, we now rewrite Blest's rank correlation in terms of \(\partial_1 C\) as well.
Writing \(h(t,v):=\partial_1 C(t,v)\), one obtains by Fubini's theorem
\begin{align}\begin{aligned}\label{eq:intro-nu-forms}
  \nu(C)
  &= 24 \int_0^1\!\!\int_0^1 \int_{0}^{u} (1-u)\,h(t,v)\de t\de u\de v \;-\; 2  \\[0.3em]
  &= 24 \int_0^1\!\!\int_0^1 \int_{t}^{1} (1-u)\,h(t,v)\de u\de t\de v \;-\; 2
   = 12 \int_0^1\!\!\int_0^1 (1-t)^2\,h(t,v)\de t\de v \;-\; 2.
\end{aligned}\end{align}

Using Lemma~\ref{charSIcop} together with the representations \eqref{eq:xi} and \eqref{eq:intro-nu-forms}, the problem of maximizing $\nu$ at fixed $\xi$ can be expressed entirely in terms of the copula derivative $h$.
Lemma~\ref{charSIcop} is stated for measurable functions \(h_v\colon[0,1]\to[0,1]\), whereas Optimization Problem~\ref{opt:main} is posed over \(L^2((0,1)^2)\).
This is consistent, since any admissible copula derivative satisfies \(0\le h\le 1\) a.e., hence \(h\in L^\infty((0,1)^2)\subset L^2((0,1)^2)\).

\begin{optimizationproblem}[Maximize $\nu$ at fixed $\xi$]\label{opt:main}
Let $c\in(0,1)$.
Over $h\in L^2((0,1)^2)$, solve
\begin{align*}
\max &\quad \int_0^1\int_0^1 2(1-t)^2\,h(t,v)\de t\de v\\
\text{s.t.}&\quad 6\int_0^1\int_0^1 h^2(t,v)\de t\de v-2\le c, \qquad \int_0^1 h(t,v)\de t=v\ \text{ a.e.\ in }v,\qquad
0\le h\le 1.
\end{align*}
\end{optimizationproblem}

The objective functional appearing in the optimization problem has a particularly convenient structure.
Optimization Problem~\ref{opt:main} is a relaxed version of the original problem of maximizing $\nu$ at fixed $\xi$ over all copulas. 
Indeed, in the representation from Lemma~\ref{charSIcop}, a family $(h_v)_{v\in[0,1]}$ corresponds to a copula if and only if, in addition to the marginal condition
\(
\int_0^1 h(t,v)\de t = v
\)
for a.e.~$v\in(0,1)$ and the box constraint \(0\le h\le 1\), it also satisfies the monotonicity condition that
\[
v\mapsto h(t,v)
\]
is non-decreasing for almost every \(t\in[0,1]\). 
In Optimization Problem~\ref{opt:main}, we omit precisely this monotonicity constraint. 
This relaxation makes the variational problem substantially more tractable within the Banach-space KKT framework. 
As will be shown in Theorem~\ref{thm:KKT}, the relaxed problem nevertheless has the unique optimizer \(h_{b_c}\) from \eqref{eq:clamped_h}. 
This optimizer corresponds to \(C_{b_c}\) introduced in Section~\ref{sec:novel-copula} and, by Lemma~\ref{lem:cb_copula}, is indeed a copula.
Consequently, the optimizer of the relaxed problem is feasible for the original copula-constrained problem as well, and therefore also solves the problem of maximizing \(\nu\) over all copulas subject to the prescribed \(\xi\)-constraint.

The quantity $\nu(C)$ depends linearly on the copula derivative $h_v$, whereas $\xi(C)$ depends quadratically on $h_v$ through the integral of $h_v(t)^2$.
Consequently, the Lagrangian associated with the constrained optimization problem becomes a concave quadratic functional in the variable $h_v$.
Since the admissible set defined by the copula constraints is convex, any stationary point satisfying the KKT conditions is automatically the unique global optimum.
The optimization problem is solved in the following theorem, which establishes the previously introduced copula family $(C_b)_{b\in\R\setminus\{0\}}$ as the unique solution to Optimization Problem~\ref{opt:main} for appropriate choice of $b$ in dependence of $c$.

\begin{theorem}[Solution to Optimization Problem~\ref{opt:main}]\label{thm:KKT}
The optimizer of Optimization Problem~\ref{opt:main} is uniquely given by $h_{b_c}$ from \eqref{eq:clamped_h}, for some $b_c>0$ uniquely determined by $\xi(C_{b_c})=c$.
\end{theorem}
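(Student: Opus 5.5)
We will solve Optimization Problem~\ref{opt:main} directly with a Lagrange-multiplier argument. This is a concave maximization of a linear functional over a convex set, so we will produce explicit multipliers and verify optimality by hand rather than rely on constraint qualifications. We attach a multiplier $\mu\ge0$ to the $\xi$-constraint, a multiplier $\lambda(v)$ (a function of $v$) to the marginal constraint, and keep the box constraint $0\le h\le1$ implicit. For fixed $\mu>0$ and $\lambda$ the Lagrangian
\[
L(h)=\int_0^1\!\!\int_0^1 \Bigl(2(1-t)^2h(t,v)-3\mu\,h(t,v)^2+\lambda(v)h(t,v)\Bigr)\de t\de v
\]
decouples pointwise in $(t,v)$. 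Each integrand $x\mapsto 2(1-t)^2x-3\mu x^2+\lambda(v)x$ is strictly concave on $[0,1]$. Its unique maximizer is
\[
x^*=\clamp\Bigl(\tfrac{2(1-t)^2+\lambda(v)}{6\mu},0,1\Bigr).
\]
Setting $b:=1/(3\mu)$ and $q(v):=-\lambda(v)/2$ gives $x^*=h_b(t,v)$ as in \eqref{eq:clamped_h}. Lemma~\ref{lem:unique_q} lets us choose $q(v)$, hence $\lambda(v)$, so that the marginal constraint holds for every $v$. Measurability and boundedness of $q$ (it lies in $[-1/b,1]$) make $\lambda\in L^2(0,1)$ a legitimate multiplier.

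Next we need to choose $b=b_c$ with $\xi(C_b)=c$, so that the $\xi$-constraint is active and complementary slackness holds. By Theorem~\ref{thm:closed}, $\xi(C_b)=\Xi(b)$. The map $b\mapsto\Xi(b)$ is continuous on $(0,\infty)$, with $\Xi(b)\to0$ as $b\to0$ (from $8b^2(7-3b)/105$) and $\Xi(b)\to1$ as $b\to\infty$ (here $\gamma\to1$ and the polynomial terms cancel in the limit; this needs a short expansion). So existence of $b_c$ follows from the intermediate value theorem. For uniqueness we need strict monotonicity of $\Xi$. We would get it either from $\Xi'(b)>0$, computed from the closed form, or more conceptually by a direct comparison argument. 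If $b<b'$, then $h_{b'}$ maximizes $\int 2(1-t)^2h-\frac{1}{2b'}\int h^2$ under the marginal constraint, and comparing with $h_b$ in both problems gives monotonicity of $\int h_b^2$ in $b$. Strictness holds because $h_b\ne h_{b'}$ and each maximizer is unique by strict concavity. The relation \eqref{eq:key-derivative} then also confirms that $\NN$ is increasing along the curve.

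Finally we verify global optimality and uniqueness. Let $h$ be any feasible point and write $L_b$ for the Lagrangian with $\mu=1/(3b)$. Pointwise maximality gives $L_b(h)\le L_b(h_{b})$. Because $h$ satisfies the marginal constraint, the $\lambda$-terms coincide for $h$ and $h_b$. Because $\int h^2\le (c+2)/6=\int h_b^2$, we obtain
\[
\int 2(1-t)^2h\le \int 2(1-t)^2h_b-3\mu\Bigl(\int h_b^2-\int h^2\Bigr)\le\int 2(1-t)^2h_b .
\]
Equality forces pointwise equality in the strictly concave maximization a.e., so $h=h_b$ a.e. This gives uniqueness, and it is exactly the convex case \ref{itm:convex-case} of Lemma~\ref{lem:KKT-framework} made explicit.

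The main obstacle is the second step: showing that $c\mapsto b_c$ is well defined, i.e.\ that $\Xi$ is a continuous strictly increasing bijection $(0,\infty)\to(0,1)$. This includes the limit $\Xi(\infty)=1$ from the acosh formula. The comparison argument avoids differentiating the closed form, but it still requires continuity of $b\mapsto\int h_b^2$. We plan to prove that continuity via dominated convergence together with continuity of $q$ in $b$, using the implicit characterization in Lemma~\ref{lem:unique_q}.
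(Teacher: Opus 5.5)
Your proof is correct. It shares the paper's architecture: the same Lagrangian, the same clamped-parabola candidate with $\lambda(v)=-2q(v)$ (the paper's $\gamma(v)=2q(v)$ in minimization form), and the same choice of the $\xi$-multiplier to make that constraint active. The difference is in how you verify optimality.

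\textbf{Sufficiency step.} The paper keeps explicit box multipliers $\alpha,\beta$ and checks stationarity and complementarity. It then invokes the abstract Banach-space framework of Lemma~\ref{lem:KKT-framework}: the Hessian $2\mu_c\|k\|^2$ gives quadratic growth and hence a strict local minimizer, and convexity upgrades this to the unique global one. You instead absorb the box constraint into a pointwise maximization over $[0,1]$ and run a direct saddle-point (weak-duality) argument. Pointwise maximality gives $L_b(h)\le L_b(h_b)$ for every feasible $h$, the $\lambda$-terms cancel by the marginal constraint, and the active $\xi$-constraint finishes the inequality. The equality case plus strict pointwise concavity then forces $h=h_b$ a.e. This needs no constraint qualification or second-order theory and delivers global optimality and uniqueness in one chain, so it is tighter than the paper's Step~4.

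\textbf{Uniqueness of $b_c$.} The paper differentiates the closed form via the $\gamma$-reparametrization and gets a manifestly positive expression for $\Xi'$. That fact is reused later for the maximal-gap and concavity statements. Your comparison (revealed-preference) argument gives strict monotonicity of $\int h_b^2$ with no computation, but only monotonicity, not the pointwise sign of $\Xi'$.

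\textbf{Minor points.}
\begin{itemize}
\item With your normalization, $h_{b'}$ maximizes $\int 2(1-t)^2h-\frac{1}{b'}\int h^2$; the coefficient $\frac{1}{2b'}$ would produce $h_{2b'}$. This is harmless for the argument.
\item The claim $h_b\neq h_{b'}$ deserves one line. On the non-clamped part of any section with $v\in(0,1)$, the $t$-slope is $-2b(1-t)$, which determines $b$.
\item Continuity of $b\mapsto\int h_b^2$ needs no dominated-convergence argument. Theorem~\ref{thm:closed} already gives it, since the closed forms are continuous and agree at $b=1$.
\item You are right that $\Xi(b)\to1$ needs an expansion, which the paper only asserts. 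As $b\to\infty$ the $b^2$ and $b$ coefficients of the numerator cancel and the constant term is $210$.
\end{itemize}
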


With Theorem~\ref{thm:KKT} at hand, it follows that the copula family $(C_b)_{b\in\R\setminus\{0\}}$ defined in \eqref{eq:clamped} uniquely solves the optimization problem of maximizing $\nu$ at fixed $\xi$.
Furthermore, as a consequence of these closed-form formulas from Theorem \ref{thm:closed}, we have now established the explicit parametric representation of the exact \((\xi,\nu)\)-region in Theorem~\ref{thm:region} above.

We conclude this section by numerically investigating the maximal gap \(\nu-\xi\) over several classical parametric copula families, see Table~\ref{tab:nu_minus_xi_max}.

\begin{table}[htbp]
  \centering
  \begin{tabular}{lrrrr}
    \toprule
    Family & Parameter & $\xi$ & $\nu$ & $\nu-\xi$ \\
    \midrule
    \((C_b)_{b\in\R\setminus\{0\}}\)        &      1 &        0.305 &   0.724 &     0.419 \\
    Clayton     &      1.925 &    0.324 &   0.719 &     0.395 \\
    Frank          &      5.746 &    0.312 &   0.695 &     0.383 \\
    Gaussian       &      0.682 &    0.284 &   0.665 &     0.381 \\
    Gumbel-Hougaard &      2.106 &    0.344 &   0.689 &     0.345 \\
    Joe            &      1.6 &      0.343 &   0.717 &     0.374 \\
    \bottomrule
  \end{tabular}
  \caption{
  Parameter values that approximately maximize the gap $\nu-\xi$ for the listed copula families, together with the 
  corresponding values of Chatterjee's rank correlation, Blest's rank correlation, and their difference.
  Except for \((C_b)_{b\in\R\setminus\{0\}}\), the entries are obtained by a dense grid search over the parameter, followed by cubic-spline interpolation of \(\nu\) and \(\xi\).
  }
  \label{tab:nu_minus_xi_max}
\end{table}

\section{Proofs}
\label{sec:proofs}

We first prove the auxiliary lemmas needed for the construction of the copula family and for the explicit computation of \(\xi(C_b)\) and \(\nu(C_b)\), culminating in the proof of Theorem~\ref{thm:closed}. We then establish the optimization result in Theorem~\ref{thm:KKT} and finally prove the main result of the paper, Theorem~\ref{thm:region}.

\subsection{Proofs for the copula construction and Theorem~\ref{thm:closed}}
\label{sec:proofs_novel_copula}

\begin{proof}[Proof of Lemma~\ref{lem:unique_q}]
For each $t\in[0,1]$, the unclamped value $b((1-t)^2-q)$ decreases strictly in $q$, hence
\[
h_b^{(q_1)}(t)\ge h_b^{(q_2)}(t)
\]
whenever $q_1<q_2$, with strict inequality on a set of positive measure.
Integrating over $t$ gives $\Phi(q_1)>\Phi(q_2)$, so $\Phi$ is strictly decreasing.
Since $0\le h_b^{(q)}\le1$ for all $q$ and $t\mapsto h_b^{(q)}(t)$ depends pointwise continuously on $q$ except at finitely many switching points (which have Lebesgue measure zero), dominated convergence implies continuity of $\Phi$.
At the endpoints,
\[
b((1-t)^2+1/b)\ge1
\]
for all $t$, giving $h_b^{(-1/b)}\equiv1$ and $\Phi(-1/b)=1$,
while
\[
b((1-t)^2-1)\le0
\]
for all $t$ gives $h_b^{(1)}\equiv0$ and $\Phi(1)=0$.
Hence, $\Phi$ is a continuous, strictly decreasing bijection from $[-1/b,1]$ onto $[0,1]$, and for each $v\in[0,1]$ there exists a unique $q(v)\in[-1/b,1]$ satisfying $\Phi(q(v))=v$.

Finally, to justify measurability of $(t,v)\mapsto h_b^{(q(v))}(t)$, define
\[
H(t,q):=\clamp\!\big(b((1-t)^2-q),\,0,\,1\big).
\]
The map $(t,q)\mapsto b((1-t)^2-q)$ is continuous, and $x\mapsto \clamp(x,0,1)=\min\{1,\max\{0,x\}\}$ is continuous, so $H$ is jointly continuous on $[0,1]\times[-1/b,1]$.
Since $\Phi$ is continuous and strictly monotone, its inverse $q(v)=\Phi^{-1}(v)$ is also continuous, and therefore the map $(t,v)\mapsto (t,q(v))$ is continuous.
The composition
\[
(t,v)\mapsto H\bigl(t,q(v)\bigr)=h_b^{(q(v))}(t)
\]
is thus continuous, in particular Borel measurable, on $[0,1]\times[0,1]$.
\end{proof}

\begin{proof}[Proof of Lemma~\ref{lem:cb_copula}]
By Lemma~\ref{lem:unique_q}, the map $(t,v)\mapsto h_b(t,v)$ is measurable and satisfies
\[
\int_0^1 h_b(t,v)\de t=v
\qquad\text{for all }v\in[0,1].
\]
It remains to check that, for each fixed $t\in[0,1]$, the map $v\mapsto h_b(t,v)$ is non-decreasing. Since $\Phi$ from Lemma~\ref{lem:unique_q} is continuous and strictly decreasing, its inverse $q(v)=\Phi^{-1}(v)$ is also continuous and strictly decreasing in $v$.
Note that for each fixed $t$, the map
\(
q\mapsto \clamp\!\Big(b\big((1-t)^2-q\big),\,0,\,1\Big)
\)
is non-increasing. Hence the composition
\[
v\longmapsto h_b(t,v)
=\clamp\!\Big(b\big((1-t)^2-q(v)\big),\,0,\,1\Big)
\]
is non-decreasing. Therefore all assumptions of Lemma~\ref{charSIcop} are satisfied, and $C_b$ is a copula for every $b>0$.
\end{proof}

The remainder of Section~\ref{sec:proofs_novel_copula} is essentially devoted to deriving the relations from \eqref{eq:xi_nu_formulas} in Theorem~\ref{thm:closed}.
Since both measures are given by double integrals, we split the derivation into the inner integral (Lemma~\ref{lem:1D}) and the outer integral (Lemma~\ref{lem:vq} and Theorem~\ref{thm:closed}).

Before we state the first lemma, observe that the optimizer $h_b^{(q)}(t,v)$ has a \emph{section-wise} structure: for each fixed $v$, the map $t\mapsto h_b(t,v)$ is a clamped decreasing function determined by a single parameter $q(v)$.
This observation allows us to turn the original two–dimensional integrals into one–dimensional ones in two steps: first, integrate along sections in \(t\); second, determine $v$ as a function of $q$ and integrate over $q$.
Introduce the change of variables
\begin{equation}\label{eq:unclamped_switches}
x := 1 - t, \qquad 
r := \sqrt{q} \ \ (\text{for }q \ge 0), \qquad 
R := \sqrt{q + \tfrac{1}{b}} \ \ (\text{for }q \ge -1/b).
\end{equation}
and define also
\begin{equation}\label{eq:section_switches}
X_a(q) := \min\{1, R\}, \quad
X_s(q) :=
\begin{cases}
0, & q < 0,\\
r, & q \ge 0,
\end{cases}
\quad
a(q) := \max\{0,\, 1 - R\}, \quad
s(q) := 1 - X_s(q).
\end{equation}
Thus, \(R\) and \(r\) describe where the unclamped parabola \(x \mapsto b(x^2 - q)\) would intersect the upper and lower levels $1$ and $0$, respectively, and \(X_a(q)\) and \(X_s(q)\) are the corresponding clipped values within the unit interval, while \(a(q)\) and \(s(q)\) transition $X_a(q)$ and $X_s(q)$ back to the \(t\)-domain.
Define
\begin{equation}\label{eq:primitives}
F(x;q):=\frac{x^5}{5}-\frac{2q}{3}x^3+q^2x,\quad
T(x;q):=\frac{x^3}{3}-q\,x\,,\quad
S(x;q):=\frac{x^5}{5}-\frac{q}{3}x^3,
\end{equation}
then we have the following lemma.

\begin{lemma}[Sections and one–dimensional forms]\label{lem:1D}
For each fixed $q$,
\begin{align}
\int_0^1 h_b(t,v)\de t
&= a(q)+b\big(T(X_a(q);q)-T(X_s(q);q)\big),\label{eq:marg-1D}\\
\int_0^1 h_b(t,v)^2\de t
&= a(q)+b^2\big(F(X_a(q);q)-F(X_s(q);q)\big),\label{eq:sq-1D}\\
\int_0^1 (1-t)^2 h_b(t,v)\de t
&= \frac{1-(1-a(q))^3}{3}+b\big(S(X_a(q);q)-S(X_s(q);q)\big).\label{eq:wgt-1D}
\end{align}
Furthermore, $q\mapsto v'(q)$ is well-defined and it holds
\begin{align}
\xi(C_b)&=6\!\!\int_{-1/b}^{1}\!\!\Big(a(q)+b^2\big(F(X_a(q);q)-F(X_s(q);q)\big)\Big)\,(-v'(q))\de q-2,\label{eq:xi-1D}\\
\nu(C_b)&=12\!\!\int_{-1/b}^{1}\!\!\Big(\frac{1-(1-a(q))^3}{3}+b\big(S(X_a(q);q)-S(X_s(q);q)\big)\Big)\,(-v'(q))\de q-2.\label{eq:nu-1D}
\end{align}
\end{lemma}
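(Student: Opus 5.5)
We prove the three section identities by a direct piecewise computation, and then obtain \eqref{eq:xi-1D} and \eqref{eq:nu-1D} by changing variables from $v$ to $q$ in the outer integral.

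\textbf{Sections.} Fix $q\in[-1/b,1]$ and substitute $x=1-t$, so that $h_b=\clamp(b(x^2-q),0,1)$ with $x\in[0,1]$. Since $x\mapsto b(x^2-q)$ is increasing on $[0,1]$, the interval splits into three regions:
\begin{itemize}
\item the \emph{zero region} $x<X_s(q)$, where $x^2<q$; this region is empty if $q<0$;
\item the \emph{ramp region} $X_s(q)\le x\le X_a(q)$;
\item the \emph{saturated region} $x>X_a(q)$, where $x^2>q+1/b$, i.e.\ $x>R$.
\end{itemize}
In $t$-coordinates the saturated region is $t\in[0,a(q))$, of length $a(q)$. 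Integrating $1$, $1^2$ and $x^2\cdot 1$ over it gives $a(q)$, $a(q)$ and $\int_{X_a}^1 x^2\de x=(1-X_a^3)/3=(1-(1-a)^3)/3$, since $X_a=1-a$. On the ramp we integrate
\[
b(x^2-q),\qquad b^2(x^2-q)^2=b^2\bigl(x^4-2qx^2+q^2\bigr),\qquad b\,x^2(x^2-q).
\]
Their antiderivatives are exactly $bT$, $b^2F$ and $bS$ from \eqref{eq:primitives}, evaluated between $X_s$ and $X_a$. This yields \eqref{eq:marg-1D}--\eqref{eq:wgt-1D}. The case distinctions in \eqref{eq:section_switches} (whether $R\ge1$ and whether $q<0$) are handled uniformly by the $\min/\max$ definitions, so one computation covers all cases.

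\textbf{Change of variables.} By \eqref{eq:marg-1D} and Lemma~\ref{lem:unique_q}, $v(q):=\Phi(q)$ is given explicitly as $a(q)+b\bigl(T(X_a;q)-T(X_s;q)\bigr)$. It is continuous and strictly decreasing on $[-1/b,1]$, with $v(-1/b)=1$ and $v(1)=0$. To make sense of $v'(q)$, note that $\Phi(q)=\int_0^1\clamp(b(x^2-q),0,1)\de x$. For each fixed $x$, the integrand is Lipschitz in $q$ with constant $b$, and its $q$-derivative is $-b\,\mathbf 1_{\{X_s<x<X_a\}}$ except at the two switching points. Hence $\Phi$ is Lipschitz and differentiable everywhere, with
\[
v'(q)=-b\bigl(X_a(q)-X_s(q)\bigr)\le 0,
\]
and this derivative is continuous in $q$. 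The same formula can be checked directly by differentiating the piecewise expression, using that the boundary terms cancel because the integrand is continuous at $X_s$ and $X_a$.

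Since $v\mapsto q(v)$ is the inverse of an absolutely continuous, strictly decreasing map, the substitution $v=v(q)$ is valid for nonnegative measurable integrands $g$:
\[
\int_0^1 g(q(v))\de v=\int_{-1/b}^{1} g(q)\,(-v'(q))\de q .
\]
Applying this with $g$ equal to the right-hand sides of \eqref{eq:sq-1D} and \eqref{eq:wgt-1D}, and inserting into \eqref{eq:xi} and \eqref{eq:intro-nu-forms}, gives \eqref{eq:xi-1D} and \eqref{eq:nu-1D}. Fubini's theorem applies because $h_b$ is bounded and measurable by Lemma~\ref{lem:unique_q}.

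\textbf{Main obstacle.} The delicate point is the rigorous justification of $v'(q)$, namely its differentiability at the regime changes $q=0$ and $q=1-1/b$, together with the validity of the substitution. Lipschitz continuity of $\Phi$, obtained from dominated differentiation of the clamp as above, resolves both issues.
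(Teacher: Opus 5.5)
Your proposal is correct and follows essentially the same route as the paper. Both arguments do the piecewise section computation after substituting $x=1-t$, obtain $v'(q)$ by dominated convergence using difference quotients bounded by $b$, and finish with the monotone change of variables $v\mapsto q$. Your explicit formula $v'(q)=-b\bigl(X_a(q)-X_s(q)\bigr)$ is a useful addition: it gives continuity of $v'$ directly, and it reproduces all four regime formulas of Lemma~\ref{lem:vq} at once.
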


\begin{proof}[Proof of Lemma~\ref{lem:1D}]
Fix $q\in[-1/b,1]$ and $v$.
Let $s:=s(q)$ and $a:=a(q)$ be the points such that
\[
h_b(t,v)=
\begin{cases}
1,& t\in[0,a],\\[2pt]
b\big((1-t)^2-q\big),& t\in(a,s],\\[2pt]
0,& t\in(s,1].
\end{cases}
\]
For the marginal $\int_0^1 h_b(t,v)\de t$, one obtains
\(
\int_0^1 h_b(t,v)\de t
= a+b\int_a^s \big((1-t)^2-q\big)\de t.
\)
Set $x:=1-t$, so $\de t=-\de x$, $x(a)=1-a$, $x(s)=1-s$, and the integral becomes
\[
\int_a^s \big((1-t)^2-q\big)\de t
=\int_{x=1-a}^{1-s} (x^2-q)\,(-\de x)
=\int_{x=1-s}^{1-a} (x^2-q)\de x
=\Big(\frac{x^3}{3}-q\,x\Big)\Big|_{x=1-s}^{1-a}.
\]
With $R$ and $r$ as in \eqref{eq:unclamped_switches}, it is
\[
1-a=\min\{1,R\}=X_a(q),
\qquad 1-s=\begin{cases}r,&q\ge0,\\0,&q<0,\end{cases}=X_s(q).
\]
Hence
\(
\int_0^1 h_b(t,v)\de t
=a+b\big(T(X_a(q);q)-T(X_s(q);q)\big)
\)
with $T(x;q)$ as in \eqref{eq:primitives}, establishing \eqref{eq:marg-1D}.
Second, for the quadratic term in \eqref{eq:sq-1D}, on $(a,s]$ we have $h_b^2=b^2\big((1-t)^2-q\big)^2$, so
\begin{align*}
\int_0^1 h_b(t,v)^2\de t
= a+b^2\int_a^s \big((1-t)^2-q\big)^2\de t
= a+b^2\big(F(X_a(q);q)-F(X_s(q);q)\big),
\end{align*}
with $F(x;q)$ as in \eqref{eq:primitives}, so that \eqref{eq:sq-1D} is proved.
Third, for the weighted area in \eqref{eq:wgt-1D}, split the contribution of the plateau and the middle segment:
\[
\int_0^1 (1-t)^2 h_b(t,v)\de t
=\int_0^a (1-t)^2\de t + b\int_a^s (1-t)^2\big((1-t)^2-q\big)\de t.
\]
The first term equals $\int_{x=1-a}^{1} x^2\de x=\frac{1-(1-a)^3}{3}$.
The second equals
\[
b\int_{x=1-s}^{1-a} \big(x^4-qx^2\big)\de x
=b\Big(\frac{x^5}{5}-\frac{q}{3}x^3\Big)\Big|_{X_s(q)}^{X_a(q)}
=b\big(S(X_a(q);q)-S(X_s(q);q)\big),
\]
with $S(x;q)$ as in \eqref{eq:primitives}, proving \eqref{eq:wgt-1D}.

The one–dimensional representations \eqref{eq:xi-1D} and \eqref{eq:nu-1D} are obtained by substituting the section integrals \eqref{eq:marg-1D}--\eqref{eq:wgt-1D} into the defining two–dimensional formulas \eqref{eq:xi} and \eqref{eq:intro-nu-forms},
\begin{align}\label{eq:both_measures}
\xi(C_b)=6\!\!\int_0^1\!\!\int_0^1 h_b(t,v)^2\de t\de v -2,
\qquad
\nu(C_b)=12\!\!\int_0^1\!\!\int_0^1 (1-t)^2 h_b(t,v)\de t\de v -2.
\end{align}
By Lemma~\ref{lem:unique_q}, for each fixed \(b>0\) the function \(v\mapsto q(v)\) is defined implicitly by the marginal condition \(\int_0^1 h_b(t,v)\de t=v\), and the mapping \(v\mapsto q(v)\) is continuous and strictly decreasing from \(q=-1/b\) (when \(v=1\)) to \(q=1\) (when \(v=0\)).
Hence, it can be inverted to a mapping 
\(
q \mapsto v(q) = \int_0^1 h_b(t,q)\de t.
\)
For each fixed \(t\), the function \(q \mapsto h_b(t,q)\) is piecewise polynomial in \(q\) with breakpoints 
at \(a(q)\) and \(s(q)\), where
\[
a(q) = 1 - \sqrt{q + 1/b}, 
\qquad 
s(q) = 1 - \sqrt{\max\{q,0\}}.
\]
Since \(a(q)\) and \(s(q)\) are Lipschitz and piecewise \(C^{1}\), the function 
\(h_b(t,q)\) is bounded and piecewise continuously differentiable in \(q\) for each fixed \(t\).
For almost every \(t\), the partial derivative \(\partial_q h_b(t,q)\) exists and satisfies 
\(|\partial_q h_b(t,q)| \le b\), so that it is dominated by the integrable function 
\(\Phi(t) \equiv b\) on \([0,1]\), independently of \(q\).
To justify differentiating under the integral sign, consider for \(h\neq 0\) the difference quotient
\[
f_h(t) = \frac{h_b(t,q+h) - h_b(t,q)}{h}.
\]
For each fixed \(q\), the pointwise limit \(\lim_{h\to 0} f_h(t)\) exists for almost every \(t\) and equals \(\partial_q h_b(t,q)\).
Moreover, \(|f_h(t)| \le b\) for all \(t\) and small \(h\).
Hence, by the dominated convergence theorem, 
\[
\lim_{h\to 0} \int_0^1 f_h(t)\de t
= \int_0^1 \lim_{h\to 0} f_h(t)\de t
= \int_0^1 \partial_q h_b(t,q)\de t.
\]
Therefore,
\[
v'(q)
= \lim_{h\to 0} \frac{v(q+h)-v(q)}{h}
= \int_0^1 \partial_q h_b(t,q)\de t,
\]
and the derivative depends continuously on \(q\).
Since \(h_b(t,q)\) decreases in \(q\) for every \(t\), it follows that \(v'(q)<0\) for all \(q\in(-1/b,1)\).

As observed above, since \(v(-1/b)=1\) and \(v(1)=0\), the range of \(q\) is \([{-}1/b,1]\).
Substituting the section formulas into \eqref{eq:xi} and \eqref{eq:intro-nu-forms} and then changing variables $v\mapsto q$
gives the desired one–dimensional representations.
Indeed, write $q=q(v)$ for the (strictly decreasing) inverse of $v=v(q)$, so that $-\de v = v'(q)\de q$ and $v\in[0,1]$ corresponds to $q\in[-1/b,1]$.

\smallskip
\emph{For $\xi$:}
\[
\xi(C_b)
= 6\int_0^1\Big(a(q(v)) + b^2\big(F(X_a(q(v));q(v)) - F(X_s(q(v));q(v))\big)\Big)\de v - 2,
\]
by \eqref{eq:sq-1D} and \eqref{eq:both_measures}.
Set
\(
G(q):=a(q) + b^2\!\big(F(X_a(q);q) - F(X_s(q);q)\big).
\)
Using $v=v(q)$ and $-\de v=v'(q)\de q$, one has
\[
\int_0^1 G(q(v))\de v
= \int_{-1/b}^{1} G(q)\,(-v'(q))\de q,
\]
whence
\[
\xi(C_b)
= 6\!\!\int_{-1/b}^{1}\!\!\Big(a(q) + b^2\big(F(X_a(q);q) - F(X_s(q);q)\big)\Big)\,(-v'(q))\de q - 2.
\]

\smallskip
\emph{For $\nu$:}
\begin{align*}
\nu(C_b)
= 12\int_0^1\!\Big(\tfrac{1-(1-a(q(v)))^3}{3}
   + b\big(S(X_a(q(v));q(v)) - S(X_s(q(v));q(v))\big)\Big)\de v - 2
\end{align*}
by \eqref{eq:wgt-1D} and \eqref{eq:both_measures}. Set
\(
H(q):=\tfrac{1-(1-a(q))^3}{3} + b\big(S(X_a(q);q) - S(X_s(q);q)\big).
\)
The same substitution yields
\[
\int_0^1 H(q(v))\de v
= \int_{-1/b}^{1} H(q)\,(-v'(q))\de q,
\]
and therefore
\[
\nu(C_b)
= 12\!\!\int_{-1/b}^{1}\!\!\Big(\tfrac{1-(1-a(q))^3}{3}
   + b\big(S(X_a(q);q) - S(X_s(q);q)\big)\Big)\,(-v'(q))\de q - 2.
\]
These are exactly the asserted formulas \eqref{eq:xi-1D} and \eqref{eq:nu-1D}.
\end{proof}

For fixed \(b>0\), each section \(t\mapsto h_b(t,v)\) is a clamped decreasing function of the form
\begin{equation}\label{eq:section_appendix}
h_b(t,v)=\clamp\!\Big(b\big((1-t)^2-q(v)\big),0,1\Big),
\end{equation}
and the (unique) section parameter \(q(v)\) is determined by the marginal constraint
\[
\int_0^1 h_b(t,v)\de t \;=\; v.
\]
Geometrically, \(q\) fixes where the section equals its upper clamp \(1\) (the ``plateau''), where it follows the parabola, and where it is clamped at \(0\).
It is convenient to write
\begin{equation}\label{eq:RrDelta}
\Delta:=R-r\ge 0,
\end{equation}
where \(R\) and \(r\) represent the unclamped switching points for the upper and lower clamps, respectively, as defined in \eqref{eq:unclamped_switches}.

Depending on the value of \(q\), the parabola \(t \mapsto b((1-t)^2 - q)\) from \eqref{eq:section_appendix} meets the clamping bounds \(0\) and~\(1\) in different ways, giving rise to four distinct regimes.
For \(q<0\), the parabola lies entirely above the \(t\)-axis; if it reaches the upper clamp at~1, a flat plateau appears (\emph{upper–clamped regime}), while for weaker curvature (\(b\le1\)) the curve stays below~1 and no clamping occurs (\emph{unclamped regime}).
For small positive~\(q\), the parabola intersects both clamps, yielding a truncated section bounded by two switching points \(a(v)\) and \(s(v)\) (\emph{double–clamped regime}, possible only when \(b>1\)).
For large~\(q\), the parabola lies below the axis and touches only the lower clamp at~0 (\emph{lower–clamped regime}).
These four configurations correspond exactly to the analytic cases in Lemma~\ref{lem:vq} and all occur in Figure~\ref{fig:cvxpy_xi_nu} (e.g., in the left plot there, the upper-clamped case occurs for $v=0.9$, the unclamped case for $v=0.5$, the lower-clamped case for $v=0.1$; the double-clamped case occurs in the right plot for $v=0.5$).

\begin{lemma}[Integration substitution formulas for $v(q)$]\label{lem:vq}
The successive changes of variables $v=v(q)$ and $q=q(R)$ (or $q=q(r)$)
used in the integrals \eqref{eq:xi-1D} and \eqref{eq:nu-1D} of Lemma~\ref{lem:1D} are given by the following
explicit formulas:
\begin{enumerate}[label=(\roman*)]
\item Upper--clamped regime (\(q<0\) and \(R<1\)): if \(q=R^2-\tfrac{1}{b}\), then
\(-\frac{dq}{dR}v'(q)=2bR^2.\)

\item Unclamped regime (\(q<0\) and \(R\ge1\); only if \(b\le1\)): if \(q=R^2-\tfrac{1}{b}\), then
\(-\frac{dq}{dR}v'(q)=2bR.\)

\item Double--clamped regime (\(0\le q<1-\tfrac{1}{b}\); only if \(b>1\)): if \(q=R^2-\tfrac{1}{b}\) and \(r=\sqrt{R^2-\tfrac{1}{b}}\), then
\(-\frac{dq}{dR}v'(q)=1+b(R-r)^2.\)

\item Lower--clamped regime (\(\max\{1-\tfrac{1}{b},0\}\le q\le 1\)): if \(q=r^2\), then
\(-\frac{dq}{dr}v'(q)=2br(1-r).\)
\end{enumerate}
\end{lemma}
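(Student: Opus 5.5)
We compute $v(q)=\int_0^1 h_b(t,q)\de t$ explicitly in each regime from \eqref{eq:marg-1D} of Lemma~\ref{lem:1D}, differentiate in $q$, and then multiply by $dq/dR$ or $dq/dr$. Since $q=R^2-\tfrac1b$ gives $dq/dR=2R$ and $q=r^2$ gives $dq/dr=2r$, each claim reduces to computing $v'(q)$.

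Equivalently, by the dominated-convergence argument in the proof of Lemma~\ref{lem:1D}, we may use
\[
v'(q)=\int_0^1\partial_q h_b(t,q)\de t=-b\cdot\lambda\bigl(\{t: 0<b((1-t)^2-q)<1\}\bigr),
\]
where $\lambda$ is Lebesgue measure. In the variable $x=1-t$, the set in question is $\{x\in[0,1]: q<x^2<q+1/b\}$, i.e.\ the interval between $X_s(q)$ and $X_a(q)$. So $-v'(q)=b\,(X_a(q)-X_s(q))$, up to the caveat discussed below.

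We read off each regime from \eqref{eq:section_switches}.
\begin{enumerate}[label=(\roman*)]
\item Upper-clamped ($q<0$, $R<1$): $X_a=R$ and $X_s=0$, so $-v'=bR$ and $-\tfrac{dq}{dR}v'=2bR^2$.
\item Unclamped ($q<0$, $R\ge1$): $X_a=1$ and $X_s=0$, so $-v'=b$ and the product is $2bR$. Moreover, $R\ge1$ with $q<0$ forces $1/b>1-q>1$, hence $b\le1$.
\item Double-clamped ($0\le q<1-1/b$, so $R<1$): $X_a=R$ and $X_s=r$, so $-v'=b(R-r)$.
\item Lower-clamped ($q\ge 1-1/b$, so $R\ge1$, and $q\ge0$): $X_a=1$ and $X_s=r$, so $-v'=b(1-r)$ and $-\tfrac{dq}{dr}v'=2br(1-r)$.
\end{enumerate}
The regime boundaries ($R<1\iff q<1-1/b$, and $q\ge0\iff X_s=r$) and the restriction $b>1$ in (iii) follow directly from these definitions.

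The main obstacle is case (iii). The naive computation gives $-\tfrac{dq}{dR}v'=2bR(R-r)$, not the stated $1+b(R-r)^2$. The two agree because $R^2-r^2=1/b$, so $(R-r)(R+r)=1/b$, and therefore
\[
2bR(R-r)=b(R-r)\bigl((R+r)+(R-r)\bigr)=1+b(R-r)^2.
\]
This algebraic rewriting, rather than a different calculus step, is the one place to check carefully.

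A second point to verify is the endpoint behaviour of $v'$ at the regime transitions ($q=0$ and $q=1-1/b$), where $X_s$ or $X_a$ switches formula. In the $v'$ computation these appear only as a measure-zero set of $q$, but continuity of $v'$ should still be confirmed there. We confirm it by checking that the one-sided expressions for $X_a-X_s$ match at each transition.
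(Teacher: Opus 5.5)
Your proposal is correct, but it takes a somewhat different route from the paper.

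The paper works regime by regime. It inserts the section parameters into \eqref{eq:marg-1D} to obtain closed forms such as $v=1-\tfrac{2b}{3}R^3$, $v=b(\tfrac13-q)$ and $v=b(\tfrac13-r^2+\tfrac23 r^3)$, and then differentiates each of them.

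You instead use the differentiation-under-the-integral identity from the proof of Lemma~\ref{lem:1D}. It is legitimate here because $q\mapsto h_b(t,q)$ is $b$-Lipschitz, so the difference quotients are bounded by $b$. This gives the single formula $-v'(q)=b\bigl(X_a(q)-X_s(q)\bigr)$ on all of $(-1/b,1)$, and each of the four cases becomes a one-line read-off.

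Your route has three advantages:
\begin{itemize}
\item Continuity of $v'$ across the regime boundaries comes for free, since $X_a$ and $X_s$ are continuous in $q$.
\item It makes clear that in case (iii) one really has $-v'(q)=b(R-r)$. The stated form $1+b(R-r)^2=2bR(R-r)$ is then just algebra via $R^2-r^2=1/b$, as you note.
\item It sidesteps the paper's term-by-term differentiation in (iii). There the displayed intermediate bracket lists the contribution $-q(R'-r')$ twice, even though the final expression $-\tfrac{1}{2R}\bigl(1+b(R-r)^2\bigr)=-b(R-r)$ is right.
\end{itemize}

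The paper's route, in exchange, does not lean on the dominated-convergence argument and yields explicit formulas for $v$ itself in each regime.
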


\begin{proof}[Proof of Lemma~\ref{lem:vq}]
From \eqref{eq:marg-1D} in Lemma~\ref{lem:1D}, the marginal reads
\[
v(q)=a(q)+b\big(T(X_a(q);q)-T(X_s(q);q)\big),
\qquad 
T(x;q)=\frac{x^3}{3}-qx.
\]
We treat the four regimes separately, using the notation from \eqref{eq:unclamped_switches}, \eqref{eq:section_switches} and \eqref{eq:RrDelta}.

\smallskip\noindent
\emph{(i) Upper–clamped regime (\(q<0\) and \(R<1\)).} 
Here \(a=1-R\), \(X_a=R\), \(X_s=0\), and \(q=R^2-1/b\).
Then
\[
v(q)=1-R+b\Big(\frac{R^3}{3}-qR\Big)
=1-R+b\Big(\frac{R^3}{3}-(R^2-1/b)R\Big)
=1-\frac{2b}{3}R^3.
\]
Since \(R=\sqrt{q+1/b}\), it is
\(
v'(q)=\frac{\de v}{\de R}\frac{\de R}{\de q}
=\Big(-2bR^2\Big)\frac{1}{2R}
=-\,bR.
\)
Moreover \(dq/dR=2R\). Hence
\(
-\frac{dq}{dR}v'(q)=-(2R)(-bR)=2bR^2.
\)

\smallskip\noindent
\emph{(ii) Unclamped regime (\(q<0\) and \(R\ge1\); only if \(b\le1\)).} 
Here \(a=0\), \(X_a=1\), \(X_s=0\).
Then
\[
v(q)=b\big(T(1;q)-T(0;q)\big)
=b\Big(\frac{1}{3}-q\Big),
\qquad
v'(q)=-\,b.
\]
Since again \(q=R^2-1/b\), we have \(dq/dR=2R\), and therefore
\(
-\frac{dq}{dR}v'(q)=-(2R)(-b)=2bR.
\)

\smallskip\noindent
\emph{(iii) Double–clamped regime (\(0\le q<1-\tfrac{1}{b}\); only if \(b>1\)).}
Here \(a=1-R\), \(X_a=R\), \(X_s=r\).
Using \(R^2=r^2+1/b\) and \(q=r^2\),
\[
v(q)=1-R+b\Big(\frac{R^3-r^3}{3}-q(R-r)\Big).
\]
Differentiating with respect to \(q\), noting \(R'=\frac{1}{2R}\) and \(r'=\frac{1}{2r}\),
\begin{align*}
v'(q)
=-\,\frac{1}{2R}
 + b\Big(\underbrace{(R^2-q)R' - (r^2-q)r'}_{=\;(1/b)\,R'} - (R-r) - q(R'-r')\Big)
= -\,\frac{1}{2R}\Big(1+b(R-r)^2\Big).
\end{align*}
Since \(dq/dR=2R\), we obtain
\(
-\frac{dq}{dR}v'(q)=-(2R)\Big(-\frac{1}{2R}(1+b(R-r)^2)\Big)
=1+b(R-r)^2.
\)

\smallskip\noindent
\emph{(iv) Lower–clamped regime (\(\max\{1-\tfrac{1}{b},0\}\le q\le 1\)).} 
Here \(a=0\), \(X_a=1\), \(X_s=r\).
Then
\[
v(q)=b\big(T(1;q)-T(r;q)\big)
=b\Big(\frac{1}{3}-\frac{r^3}{3}+q\,r-q\Big)
=b\Big(\frac{1}{3}-r^2+\frac{2}{3}r^3\Big),
\]
since \(q=r^2\).
Differentiating gives
\(
v'(q)=b\Big(-2r\frac{1}{2r}+2r^2\frac{1}{2r}\Big)
=b(r-1).
\)
Since \(dq/dr=2r\), it follows that
\(
-\frac{dq}{dr}v'(q)=-(2r)b(r-1)=2br(1-r).
\)

\smallskip
The claimed continuity at the regime boundaries follows by direct substitution and \(v'(q)<0\) in each regime.
\end{proof}
We now combine the one-dimensional representations from Lemma~\ref{lem:1D} with the substitution formulas from Lemma~\ref{lem:vq} to derive the closed forms for \(\xi(C_b)\) and \(\nu(C_b)\).

\begin{proof}[Proof of Theorem~\ref{thm:closed}]
We start from the one-dimensional integral representations in Lemma~\ref{lem:1D}:
\begin{align*}
\xi(C_b)=~&6\!\!\int_{-1/b}^{1}\!\!\Big(a(q)+b^2\big(F(X_a;q)-F(X_s;q)\big)\Big)\,(-v'(q))\de q-2, \\
\nu(C_b)=~&12\!\!\int_{-1/b}^{1}\!\!\Big(\frac{1-(1-a)^3}{3}+b\big(S(X_a;q)-S(X_s;q)\big)\Big)\,(-v'(q))\de q-2,
\end{align*}
with $F(x;q)=\frac{x^5}{5}-\frac{2q}{3}x^3+q^2x$ and $S(x;q)=\frac{x^5}{5}-\frac{q}{3}x^3$.
We evaluate these integrals by splitting into the cases $b\le 1$ and $b>1$, using the integral substitutions for $(-v'(q))\de q$ from Lemma~\ref{lem:vq} and the corresponding section parameters $(a, X_a, X_s)$ for each regime.

\smallskip
\noindent\emph{Case \(0<b\le 1\).}
Using Lemma~\ref{lem:1D} and Lemma~\ref{lem:vq}, the \(q\)–integral splits into the following regimes:
\begin{enumerate}[label=(\roman*)]
\item \emph{Upper–clamped:} 
\(
q\in\bigl(-\tfrac{1}{b},\,1-\tfrac{1}{b}\bigr), 
\quad 
R\in(0,1),
\quad 
 -v'(q)\de q = 2b\,R^{2}\de R.
\)

\item \emph{Unclamped:} 
\(
q\in\bigl[\,1-\tfrac{1}{b},\,0\,\bigr),
\quad 
R\in\bigl[1,\,\tfrac{1}{\sqrt b}\bigr),
\quad 
 -v'(q)\de q = 2b\,R\de R.
\)

\item[(iv)] \emph{Lower–clamped:} 
\(
q\in[0,1],
\quad 
r:=\sqrt q\in(0,1],
\quad 
 -v'(q)\de q = 2b\,r(1-r)\de r.
\)
\end{enumerate}

\noindent
In each regime, the corresponding section parameters are
\begin{enumerate}[label=(\roman*)]
\item \(a = 1-R,\quad X_a = R,\quad X_s = 0,\quad q = R^{2}-\tfrac{1}{b}.\)

\item \(a = 0,\quad X_a = 1,\quad X_s = 0,\quad q = R^{2}-\tfrac{1}{b}.\)

\item[(iv)] \(a = 0,\quad X_a = 1,\quad X_s = r,\quad q = r^{2}.\)
\end{enumerate}
and with \(F(x;q)=\frac{x^5}{5}-\frac{2q}{3}x^3+q^2x\), \(S(x;q)=\frac{x^5}{5}-\frac{q}{3}x^3\) from \eqref{eq:primitives}, Lemma~\ref{lem:1D} gives the regime integrands
\[
\begin{aligned}
G &:= a + b^2\!\left(F(X_a;q)-F(X_s;q)\right),
&\qquad
H &:= \frac{1-(1-a)^3}{3} + b\!\left(S(X_a;q)-S(X_s;q)\right),
\end{aligned}
\]
for \(\xi\) and \(\nu\), respectively. Hence
\[
\begin{aligned}
\xi(C_b)
&= 6\Bigg(
   2b\!\int_{0}^{1}\! G_i(R)\,R^2\de R
 + 2b\!\int_{1}^{1/\sqrt b}\! G_{ii}(R)\,R\de R
 + 2b\!\int_{0}^{1}\! G_{iv}(r)\,r(1-r)\de r
\Bigg) - 2,\\[4pt]
\nu(C_b)
&= 12\Bigg(
   2b\!\int_{0}^{1}\! H_i(R)\,R^2\de R
 + 2b\!\int_{1}^{1/\sqrt b}\! H_{ii}(R)\,R\de R
 + 2b\!\int_{0}^{1}\! H_{iv}(r)\,r(1-r)\de r
\Bigg) - 2,
\end{aligned}
\]
where, explicitly,
\[
\begin{aligned}
G_i(R)&=(1-R)+b^2\!\left(\frac{R^5}{5}-\frac{2q}{3}R^3+q^2R\right)\!,
& H_i(R)&=\frac{1-R^3}{3}+b\!\left(\frac{R^5}{5}-\frac{q}{3}R^3\right)\!,
& q&=R^2-\tfrac{1}{b},\\[3pt]
G_{ii}(R)&=b^2\!\left(\frac{1}{5}-\frac{2q}{3}+q^2\right)\!,
& H_{ii}(R)&=b\!\left(\frac{1}{5}-\frac{q}{3}\right)\!,
& q&=R^2-\tfrac{1}{b},\\[3pt]
G_{iv}(r)&=b^2\!\left(\frac{1}{5}-\frac{2r^2}{3}+r^4 - \frac{r^5}{5}\right)\!,
& H_{iv}(r)&=b\!\left(\frac{1}{5}-\frac{r^2}{3} - \frac{r^5}{5} + \frac{r^5}{3}\right)\!.
\end{aligned}
\]
All six regime integrals are elementary polynomials in \(R\) (or \(r\)) and \(b\), and can be evaluated symbolically. The three contributions to \(\xi(C_b)\) read
\[
\begin{aligned}
I^{(i)}_{\xi,A}
&:=\int_{0}^{1} G_i(R)\,\bigl(2bR^2\bigr)\de R
   \;=\; \frac{2b\bigl(3b^2-10b+15\bigr)}{45},\\[2mm]
I^{(ii)}_{\xi,A}
&:=\int_{1}^{1/\sqrt b} G_{ii}(R)\,\bigl(2bR\bigr)\de R
   \;=\; -\frac{b^3}{5}+\frac{8b^2}{15}-\frac{2b}{3}+\frac13,\\[2mm]
I^{(iv)}_{\xi,A}
&:=\int_{0}^{1} G_{iv}(r)\,\bigl(2br(1-r)\bigr)\de r
   \;=\; \frac{b^3}{35}.
\end{aligned}
\]
Hence
\[
\xi(C_b)
\;=\;
6\Bigl(I^{(i)}_{\xi,A}+I^{(ii)}_{\xi,A}+I^{(iv)}_{\xi,A}\Bigr)-2
\;=\;
\frac{8b^{2}(7-3b)}{105}.
\]

Similarly, for \(\nu(C_b)\) one obtains
\[
\begin{aligned}
I^{(i)}_{\nu,A}
&:=\int_{0}^{1} H_i(R)\,\bigl(2bR^2\bigr)\de R
   \;=\;\frac{b(20-3b)}{90},\\[2mm]
I^{(ii)}_{\nu,A}
&:=\int_{1}^{1/\sqrt b} H_{ii}(R)\,\bigl(2bR\bigr)\de R
   \;=\;-\frac{b^2}{30}-\frac{2b}{15}+\frac16,\\[2mm]
I^{(iv)}_{\nu,A}
&:=\int_{0}^{1} H_{iv}(r)\,\bigl(2br(1-r)\bigr)\de r
   \;=\;\frac{4b^2}{105}.
\end{aligned}
\]
Thus
\(
\nu(C_b)
=
12\Bigl(I^{(i)}_{\nu,A}+I^{(ii)}_{\nu,A}+I^{(iv)}_{\nu,A}\Bigr)-2
=
\frac{4b(28-9b)}{105},
\)
which is the claimed formula for \(\nu(C_b)=\NN(b)\) in the case \(0<b\le 1\).

\smallskip
\noindent\emph{Case \(b>1\).}
Here the \(q\)–integral splits into regimes (i), (iii), and (iv):
\begin{enumerate}[label=(\roman*)]
\item \emph{Upper–clamped:}
\(
R\in\Bigl(0,\tfrac{1}{\sqrt b}\Bigr),\quad
  -v'(q)\de q=2b\,R^{2}\de R.
\)
\item[(iii)] \emph{Double–clamped:}
\(
R\in\Bigl(\tfrac{1}{\sqrt b},1\Bigr),\quad r=\sqrt{R^2-\tfrac{1}{b}},\quad
  (-v'(q))\de q=\bigl(1+b(R-r)^2\bigr)\de R.
\)
\item[(iv)] \emph{Lower–clamped:}
\(
r\in\Bigl(\sqrt{1-\tfrac{1}{b}},\,1\Bigr],\quad
  (-v'(q))\de q=2b\,r(1-r)\de r.
\)
\end{enumerate}
\noindent
In each regime, the corresponding section parameters are
\begin{enumerate}[label=(\roman*)]
\item \(a = 1-R,\quad X_a = R,\quad X_s = 0,\quad q = R^{2}-\tfrac{1}{b}.\)
\item[(iii)] \(a = 1-R,\quad X_a = R,\quad X_s = r,\quad q = R^{2}-\tfrac{1}{b}.\)
\item[(iv)] \(a = 0,\quad X_a = 1,\quad X_s = r,\quad q = r^{2}.\)
\end{enumerate}
Thus,
\small
\[
\begin{aligned}
\xi(C_b)
&= 6\Bigg(
   2b\int_{0}^{\frac1{\sqrt b}} G_i(R)R^2\de R
 + \int_{\frac1{\sqrt b}}^{1} G_{iii}(R)\bigl(1+b(R-r)^2\bigr)\de R
 + 2b\int_{\sqrt{1-\frac1b}}^{1} G_{iv}(r)r(1-r)\de r
\Bigg) - 2,\\[4pt]
\nu(C_b)
&= 12\Bigg(
   2b\int_{0}^{\frac1{\sqrt b}} H_i(R)\,R^2\de R
 + \int_{\frac1{\sqrt b}}^{1} H_{iii}(R)\,\bigl(1+b(R-r)^2\bigr)\de R
 + 2b \int_{\sqrt{1-\frac1b}}^{1} H_{iv}(r)\,r(1-r)\de r
\Bigg) - 2,
\end{aligned}
\]
\normalsize
with $G_i$, $H_i$, $G_{iv}$, and $H_{iv}$ as before, and
\[
\begin{aligned}
G_{iii}(R)&=(1-R)+b^2\!\left(\frac{R^5}{5}-\frac{2q}{3}R^3+q^2R - \frac{r^5}{5}+\frac{2q}{3}r^3-q^2r\right),\\
H_{iii}(R)&=\frac{1-R^3}{3}+b\!\left(\frac{R^5}{5}-\frac{q}{3}R^3 - \frac{r^5}{5}+\frac{q}{3}r^3\right),
\qquad q=R^2-\tfrac{1}{b},\;\; r=\sqrt{R^2-\tfrac{1}{b}}
\end{aligned}
.\]

\smallskip
\noindent\emph{Integrals for \(\xi(C_b)\):}
\begin{align*}
I^{(i)}_{\xi,B}
&=\int_{0}^{1/\sqrt b} G_i(R)\,\bigl(2bR^2\bigr)\de R
   \;=\; -\frac{14}{45b}+\frac{2}{3\sqrt b},\\[2mm]
I^{(iii)}_{\xi,B}
&=\int_{1/\sqrt b}^{1} G_{iii}(R)\,\bigl(1+b(R-r)^2\bigr)\de R\\
&=\;\frac{1}{360b^{3/2}}\!\Bigl(
96b^{9/2}-352b^{7/2}+528b^{5/2}-192b^{3/2}
+15\sqrt b\,\log b -30\sqrt b\,\log\!\bigl(\sqrt b\,\sqrt{b-1}+b\bigr)\\
&
+160\sqrt b -96b^4\sqrt{b-1}
+304b^3\sqrt{b-1} -388b^2\sqrt{b-1}
+210b^{3/2}\sqrt{b-1}
+10b\sqrt{b-1}
-240b
\Bigr),\allowdisplaybreaks\\[2mm]
I^{(iv)}_{\xi,B}
&=\int_{\sqrt{1-1/b}}^{1} G_{iv}(r)\,\bigl(2br(1-r)\bigr)\de r\\
&=\;\frac{1}{105b}\!\Bigl(
-32b^{9/2}+112b^{7/2}-154b^{5/2}+91b^{3/2}
-14\sqrt b+32b^4\sqrt{b-1}-96b^3\sqrt{b-1}\\
&\hspace{2cm}
+110b^2\sqrt{b-1}-46b\sqrt{b-1}
\Bigr).
\end{align*}

\smallskip
\noindent\emph{Integrals for \(\nu(C_b)\):}
\begin{align*}
I^{(i)}_{\nu,B}
&=\int_{0}^{1/\sqrt b} H_i(R)\,\bigl(2bR^2\bigr)\de R
   \;=\; -\frac{1}{30b}+\frac{2}{9\sqrt b},\\[2mm]
I^{(iii)}_{\nu,B}
&=\int_{1/\sqrt b}^{1} H_{iii}(R)\,\bigl(1+b(R-r)^2\bigr)\de R\\
&=\;\frac{1}{48b^{3/2}}\!\Bigl(
\frac{b^{3/2}}{15}\sqrt{b-1}
-\frac{29}{90}\sqrt b\,\sqrt{b-1}
-\frac{b^2}{15}
+\frac{16b}{45}
-\frac15
+\frac{2}{15b}
+\frac{\log b}{96b}
-\frac{\log\!\bigl(\sqrt b\,\sqrt{b-1}+b\bigr)}{48b}\\
&\hspace{2cm}
+\frac{107\sqrt{b-1}}{360\sqrt b}
-\frac{2b\sqrt{b-1}}{9\sqrt b}
-\frac{\sqrt{b-1}}{3b^{3/2}}
\Bigr),\allowdisplaybreaks\\[2mm]
I^{(iv)}_{\nu,B}
&=\int_{\sqrt{1-1/b}}^{1} H_{iv}(r)\,\bigl(2br(1-r)\bigr)\de r\\
&=\;\frac{1}{105b}\!\Bigl(
2\sqrt b\,(b-1)^{3/2}
-\frac{17b^2}{105}
+\frac{b}{5}
+\frac{(b-1)^2}{6}
+\frac{(b-1)^2}{15}
-\frac{2(b-1)^{5/2}}{15\sqrt b}
-\frac{4(b-1)^{7/2}}{105\sqrt b}
\Bigr).
\end{align*}

\smallskip
\noindent\emph{Summing the regimes.}
Adding all regime contributions gives
\[
\xi(C_b)
= 6\Bigl(I^{(i)}_{\xi,B}+I^{(iii)}_{\xi,B}+I^{(iv)}_{\xi,B}\Bigr)-2,
\qquad
\nu(C_b)
= 12\Bigl(I^{(i)}_{\nu,B}+I^{(iii)}_{\nu,B}+I^{(iv)}_{\nu,B}\Bigr)-2.
\]
The logarithmic terms
\(
\frac{\log b}{96b}
-\frac{\log\!\bigl(\sqrt b\,\sqrt{b-1}+b\bigr)}{48b}
\)
combine into a single expression involving  \(\acosh(\sqrt b)\).
Indeed, $\acosh$ is commonly defined by
\(
\acosh(x)=\log\!\bigl(x+\sqrt{x^2-1}\,\bigr),
\)
and satisfies the identity
\(
\acosh(\sqrt b)
=\log\!\bigl(b+\sqrt b\,\sqrt{b-1}\,\bigr)-\tfrac12\log b.
\)
This implies
\[
\log b-2\log\!\bigl(\sqrt b\,\sqrt{b-1}+b\bigr)
=-2\,\acosh(\sqrt b),
\]
so that the above combination of logarithms is proportional to \(\acosh(\sqrt b)\).
Introducing
\(
\gamma:=\sqrt{\frac{b-1}{\,b\,}}
\)
and
\(
A:=\acosh(\sqrt b),
\)
we obtain after simplification the closed forms
\[
\xi(C_b)
= \frac{
 183\,\gamma
 -38\,b\,\gamma
 -88\,b^{2}\gamma
 +112\,b^{2}
 +48\,b^{3}\gamma
 -48\,b^{3}
 -\frac{105\,A}{b}
}{210}
\]
and
\[
\nu(C_b)
= \frac{
 \frac{87\,\gamma}{b}
 +250\,\gamma
 -376\,b\,\gamma
 +448\,b
 +144\,b^{2}\gamma
 -144\,b^{2}
 -\frac{105\,A}{b^{2}}
}{420},
\]
which are the claimed formulas for \(\xi(C_b)=\Xi(b)\) and \(\nu(C_b)=\NN(b)\) in the case \(b>1\).

\smallskip
\noindent\emph{Continuity.}
The two cases agree at \(b=1\), yielding \(\xi(C_1)=32/105\) and \(\nu(C_1)=76/105\).
\end{proof}

\subsection{Proof of Theorem~\ref{thm:KKT}}
\label{sec:proofs-kkt}

We next turn to the variational characterization of the boundary family and verify that the clamped solution satisfies the KKT system.

\begin{proof}[Proof of Theorem~\ref{thm:KKT}]
\emph{Step 1: Lagrangian and KKT system.}
We cast the problem in minimization form and include the box constraints in the Lagrangian.
Let
\[
  f(h):=-\!\!\int_0^1\!\!\int_0^1 2(1-t)^2\,h(t,v)\de t\de v.
\]
The normal-cone condition in Lemma~\ref{lem:KKT-framework}\,\ref{itm:first-order-optimality} can be represented by multipliers
\[
  \mu\ge 0,\qquad \gamma\in L^2(0,1),\qquad \alpha,\beta\in L^2_+((0,1)^2),
\]
see also \cite{rockel2025exact}.
With simple rescaling of the $\mu$ multiplier, the Lagrangian from Lemma~\ref{lem:KKT-framework} with $a=1$ is
\begin{align}\begin{aligned}\label{eq:Lag-ref}
  \mathcal{L}(h;\mu,\gamma,\alpha,\beta)
  \quad=\quad
  &-\!\!\int_0^1\!\!\int_0^1 2(1-t)^2\,h(t,v)\de t\de v
  + \frac{\mu}{6}\!\left(6\!\!\int_0^1\!\!\int_0^1 h(t,v)^2\de t\de v - 2 - c\right) \\
  &+ \int_0^1 \gamma(v)\!\left(\int_0^1 h(t,v)\de t - v\right)\de v \\
  &+ \int_0^1\!\!\int_0^1 \alpha(t,v)\,(-h(t,v))\de t\de v
  + \int_0^1\!\!\int_0^1 \beta(t,v)\,(h(t,v)-1)\de t\de v.
\end{aligned}\end{align}
For any $k\in L^2((0,1)^2)$, the Fréchet derivative of the Lagrangian $\mathcal{L}(h;\mu,\gamma,\alpha,\beta)$ in \eqref{eq:Lag-ref} with respect to $h$ is
\[
  \mathrm{D}_h\mathcal{L}(h;\mu,\gamma,\alpha,\beta)[k]
  \;=\;
  \int_0^1 \int_0^1\big(-2(1-t)^2+2\mu\,h(t,v)+\gamma(v)-\alpha(t,v)+\beta(t,v)\big)\,k(t,v)\de t\de v,
\]
for all $h,k\in L^2((0,1)^2)$.
The Karush–Kuhn–Tucker (KKT) conditions are then:
\begin{enumerate}[label=(\roman*)]
  \item \emph{Stationarity:}
  \begin{equation}\label{eq:stat-ref}
    -2(1-t)^2+2\mu\,h(t,v)+\gamma(v)-\alpha(t,v)+\beta(t,v)=0
    \quad\text{for a.e.~}(t,v)\in(0,1)^2.
  \end{equation}

  \item \emph{Primal feasibility:} $h$ satisfies the constraints of Optimization Problem~\ref{opt:main}, i.e.
  \[
     \int_0^1 h(t,v)\de t=v \ \text{ for a.e.\ }v\in(0,1),\qquad
     0\le h\le 1\ \text{a.e.},\qquad
     6\int_0^1\int_0^1 h^2(t,v)\de t\de v-2\le c.
  \]

  \item \emph{Dual feasibility:}
  \(
     \mu\ge 0,\quad \alpha\ge 0\ \text{a.e.},\quad \beta\ge 0\ \text{a.e.}
  \)

  \item \emph{Complementarity:}
  \begin{equation}\label{eq:compl-ref}
     \mu\Big(6\int_0^1\int_0^1 h^2(t,v)\de t\de v-2-c\Big)=0,\quad
     \alpha(t,v)\,h(t,v)=0,\quad
     \beta(t,v)\,(1-h(t,v))=0
  \end{equation}
\end{enumerate}

\smallskip
\emph{Step 2: Construction of a KKT point.}
For $\mu>0$, consider the solution candidate $h_\mu:=h_{1/\mu}$ from \eqref{eq:clamped_h}, that is,
\begin{equation}\label{eq:def-candidate}
  h_\mu(t,v)=\clamp\!\Big(\frac{(1-t)^2-q(v)}{\mu},\,0,\,1\Big),
\end{equation}
and let
\begin{equation}\label{eq:def-alphabeta}
    \gamma(v):=2q(v),\qquad
  \alpha(t,v):=\big(2q(v)-2(1-t)^2\big)_+,\qquad
  \beta(t,v):=\big(2(1-t)^2-2q(v)-2\mu\big)_+.
\end{equation}
By construction, $h_\mu$ satisfies the marginal constraint and $0\le h_\mu\le 1$ a.e.; hence $h_\mu$ is feasible.
Clearly, $\alpha,\beta\in L^2_+((0,1)^2)$ and $\gamma\in L^2(0,1)$ since $q$ is bounded.

\smallskip
\emph{Step 3: Verification of the KKT conditions.}
Primal feasibility follows by construction, and dual feasibility is satisfied as $\mu, \alpha, \beta$ are non-negative and $q$ is bounded. 
Regarding complementarity, we first check 
\[
  \mu\Big(6\int_0^1\int_0^1 h_\mu^2(t,v)\de t\de v-2-c\Big) = \mu(\Xi(1/\mu) - c) = 0.
\]
For \(0<b\le 1\), differentiating \eqref{eq:Xi-formula} gives
\[
\Xi'(b)=\frac{8b(14-9b)}{105}>0.
\]
For \(b>1\), it is more convenient to reparametrize by
\[
\gamma:=\sqrt{\frac{b-1}{b}}\in(0,1).
\]
Then
\[
b=\frac{1}{1-\gamma^2},
\qquad
A:=\acosh(\sqrt b)=\frac12\log\!\frac{1+\gamma}{1-\gamma},
\qquad
\frac{db}{d\gamma}=\frac{2\gamma}{(1-\gamma^2)^2}.
\]
Substituting \(b=(1-\gamma^2)^{-1}\) into \eqref{eq:Xi-formula} and applying the chain rule \(\Xi'(b) = \frac{d\Xi}{d\gamma} \big/ \frac{db}{d\gamma}\), a straightforward simplification yields
\[
\Xi'(b)
=
\frac{(1-\gamma)^2}{210(1+\gamma)^2}
\Bigl(
105\gamma^4A
+420\gamma^3A
+39\gamma^3
+630\gamma^2A
+156\gamma^2
+420\gamma A
+215\gamma
+105A
+80
\Bigr).
\]
Since \(\gamma\in(0,1)\) and \(A=\frac12\log\!\frac{1+\gamma}{1-\gamma}>0\), every term inside the parenthesis is strictly positive. Hence,
\[
\Xi'(b)>0
\qquad\text{for all } b>1.
\]
Therefore, \(b\mapsto \Xi(b)\) is strictly increasing on \((0,\infty)\).
Consequently, since \(\mu\mapsto 1/\mu\) is strictly decreasing on \((0,\infty)\), the map
\[
\mu\mapsto \Xi(1/\mu)
=
6\int_0^1\int_0^1 h_\mu^2(t,v)\de t\de v-2
\]
is strictly decreasing on \((0,\infty)\).
Moreover, Theorem~\ref{thm:closed} yields the limits
\begin{equation}\label{eq:limits_xi_mu}
\lim_{\mu\downarrow 0}6\int_0^1\int_0^1 h_\mu^2(t,v)\de t\de v-2=1,\qquad
\lim_{\mu\uparrow\infty}6\int_0^1\int_0^1 h_\mu^2(t,v)\de t\de v-2=0.
\end{equation}
Hence, there exists a unique $\mu_c>0$ such that
\(
\xi(C_{1/\mu_c})=c,
\)
and choosing $\mu=\mu_c$ makes this complementarity hold.
Regarding the complementarity for the box constraints, note that from \eqref{eq:def-candidate}–\eqref{eq:def-alphabeta},
\[
  \{h_\mu=0\}=\{(1-t)^2\le q(v)\},\quad
  \{h_\mu=1\}=\{(1-t)^2\ge q(v)+\mu\}.
\]
On $\{h_\mu=0\}$, it is
\[
\alpha=2(q-(1-t)^2)\ge 0, \quad \beta=0,
\]
hence $\alpha\,h_\mu=0$, $\beta(1-h_\mu)=0$.
Furthermore, on $\{0<h_\mu<1\}$, it is
\(
\alpha=\beta=0,
\)
hence $\alpha\,h_\mu=\beta(1-h_\mu)=0$.
Lastly, on $\{h_\mu=1\}$, it is
\[
\alpha=0,\quad\beta=2\big((1-t)^2-q-\mu\big)\ge 0,
\]
hence again $\alpha\,h_\mu=\beta(1-h_\mu)=0$.
Thus, the box-complementarity in \eqref{eq:compl-ref} holds.

Let now $x:=(1-t)^2$ and $q:=q(v)$ for brevity.
We verify the stationarity condition
\[
  -2x+2\mu\,h_\mu+2q-\alpha+\beta=0 \quad\text{a.e.}
\]
from \eqref{eq:stat-ref} by three cases.
If $x\le q$ (i.e.\ $h_\mu=0$), then $\alpha=2(q-x)$ and $\beta=0$, hence
\[
 -2x+0+2q-(2(q-x))+0=0.
\]
If $q<x<q+\mu$ (i.e.\ $0<h_\mu<1$), then $\alpha=\beta=0$ and $h_\mu=(x-q)/\mu$, hence
\[
 -2x+2\mu\frac{x-q}{\mu}+2q=0.
\]
Lastly, if $x\ge q+\mu$ (i.e.\ $h_\mu=1$), then $\alpha=0$ and $\beta=2(x-q-\mu)$, hence
\[
 -2x+2\mu+2q-0+2(x-q-\mu)=0.
\]
Therefore, \eqref{eq:stat-ref} holds pointwise a.e.

\smallskip
\emph{Step 4: Optimality and uniqueness.}
At $(h_{\mu_c};\mu_c,\gamma,\alpha,\beta)$ the Lagrangian Hessian is
\[
  D^2_{hh}\mathcal{L}(h_{\mu_c})[k,k]=2\mu_c\|k\|_{L^2((0,1)^2)}^2\quad(\mu_c>0),
\]
for all $k\in L^2((0,1)^2)$, so the quadratic–growth sufficiency condition from Lemma~\ref{lem:KKT-framework}~\ref{itm:quadratic-growth} holds, yielding a strict local minimizer.
Finally, setting $b_c:=1/\mu_c$, we obtain the section-wise form
\[
h_{b_c}(t,v)=\clamp\!\big(b_c((1-t)^2-q(v)),0,1\big)
\]
with \(\xi(C_{b_c})=c\).
Since $f$ is linear and the feasible set is convex, this strict local minimizer is the \emph{unique global} minimizer, equivalently the unique maximizer of $\nu$ at level $\xi=c$.
The section-wise form \eqref{eq:def-candidate} is exactly the claimed clamped decreasing parabola.
\end{proof}

\subsection{Proof of Theorem~\ref{thm:region}}
\label{sec:proofs-region}

The proof of Theorem~\ref{thm:region} relies on a few lemmas, which we give first. 
The starting point is Lemma~\ref{lemshuff}, which provides a continuous transformation of a stochastically increasing copula into a stochastically decreasing copula while preserving the \(\partial_1\)-section structure and moving monotonically in concordance order.
It is used to derive that every value in the interior of the exact $(\xi,\nu)$-region is also attained.

\begin{lemma}[Shuffling lemma, \cite{ansari2025exact}]\label{lemshuff}
    Let \(C\) be a bivariate SI copula and let \((U,V)\) be a random vector with distribution function \(C.\) For \(p\in [0,1],\) consider the transformation \(T_p\colon [0,1]\to [0,1]\) defined by
    \begin{align}
        T_p(u) := \begin{cases}
        u, &\text{if } 0\le u \le 1-p,\\
        1-(u-(1-p)), & \text{if } 1-p < u \le 1.
    \end{cases}
    \end{align}
    Denote by \(C_p\) the distribution function of \((T_p(U),V).\) Then \(C_p\) is a copula having the following properties.
    \begin{enumerate}[label=(\roman*)]
        \item \label{lemshuff1} \(C_0 = C = C^\uparrow\) and \(C_1 = C_\downarrow,\)
        \item \label{lemshuff2} \(C_p =_{\partial_1 S} C\) for all \(p\in [0,1],\)
        \item \label{lemshuff3} \(C_p\geq_{co} C_{p'}\) for all \(p\geq p',\)
        \item \label{lemshuff4} \(C_p\) is continuous in \(p\) with respect to uniform convergence.
    \end{enumerate}
\end{lemma}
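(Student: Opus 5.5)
The plan is to work directly with the random vector $(U,V)\sim C$ and with the explicit map $T_p$. The map $T_p$ keeps $[0,1-p]$ fixed and reverses the block $(1-p,1]$ onto itself. It is a measure-preserving bijection of $[0,1]$ (up to a null set). Hence $T_p(U)$ is uniform and $C_p$ is a copula. Its partial derivative is
\[
\partial_1 C_p(t,v)=\partial_1 C\bigl(T_p^{-1}(t),v\bigr),
\qquad T_p^{-1}=T_p .
\]
I would derive this from $C_p(u,v)=P(T_p(U)\le u,V\le v)=\int_{T_p^{-1}([0,u])}\partial_1C(s,v)\de s$. All four properties will be read off from this formula.

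For \ref{lemshuff1}: $T_0=\mathrm{id}$, so $C_0=C$, and $C=C^\uparrow$ because $C$ is SI. For $p=1$ we have $T_1(u)=1-u$, so $\partial_1C_1(t,v)=\partial_1C(1-t,v)$. Since $\partial_1C(\cdot,v)$ is non-increasing (SI), this is the non-decreasing rearrangement of the section, i.e.\ the stochastically decreasing rearrangement $C_\downarrow$. For \ref{lemshuff2}: for each $v$, $t\mapsto\partial_1C_p(t,v)$ is the composition of $\partial_1C(\cdot,v)$ with a measure-preserving bijection. It is therefore equimeasurable with $\partial_1C(\cdot,v)$, which is exactly the meaning of $=_{\partial_1S}$. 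For \ref{lemshuff4}: $|C_p(u,v)-C_{p'}(u,v)|\le\lambda\bigl(T_p^{-1}[0,u]\,\triangle\,T_{p'}^{-1}[0,u]\bigr)$ since $0\le\partial_1C\le1$. Each preimage is $[0,u]$, or the union of $[0,1-p]$ with an interval of length $u-(1-p)$ at the right end. So the symmetric difference has measure $O(|p-p'|)$ uniformly in $(u,v)$.

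The main step is \ref{lemshuff3}, monotonicity in concordance order; I expect it to be the real obstacle, and note the direction as stated: $C_p\ge_{co}C_{p'}$ for $p\ge p'$. Write $g=\partial_1C(\cdot,v)$, which is non-increasing. Then
\[
C_p(u,v)=\int_0^u g(T_p(s))\de s .
\]
For $u\le 1-p$ this equals $C(u,v)$. For $u>1-p$ it equals $C(1-p,v)+\int_{1-u}^{p}\ldots$; after substitution, the tail block contributes $\int_{2-p-u}^{1} g$. I would compare $p$ and $p'$ pointwise in $u$ and use the monotonicity of $g$. The reversed block takes mass from the right end, where $g$ is smallest, and I would check which sign of the difference this produces. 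If the computation yields the opposite inequality to the stated one, I would reconcile it with the convention for $\ge_{co}$ used in \cite{ansari2025exact}, for instance whether it refers to the pointwise order of copulas. Everything else is routine bookkeeping.
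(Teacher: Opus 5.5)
\emph{Context.} The paper does not prove this lemma; it imports it from \cite{ansari2025exact} and only uses (i), (ii) and (iv), i.e.\ a continuous, $\xi$-preserving path from $M$ to $W$. Your set-up is the natural one and is sound as far as it goes. $T_p$ is an a.e.\ measure-preserving involution, so $\partial_1 C_p(t,v)=\partial_1 C(T_p(t),v)$. This yields (i), (ii) and (iv) as you describe; your symmetric-difference bound is in fact at most $2|p-p'|$.

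\emph{The gap is (iii).} You leave it undone, and you propose to rescue a wrong sign by reinterpreting $\ge_{co}$. No such rescue exists, and the computation takes two lines. Let $g=\partial_1 C(\cdot,v)$, which is non-increasing. For $p\ge 1-u$ one has $C_p(u,v)=\int_0^{1-p}g+\int_{2-p-u}^1 g$. Hence for $1-u\le p'\le p$,
\[
C_{p'}(u,v)-C_p(u,v)=\int_{1-p}^{1-p'}g(s)\de s-\int_{2-p-u}^{2-p'-u}g(s)\de s\;\ge\;0,
\]
because the second interval is the first shifted to the right by $1-u\ge 0$. The remaining cases reduce to this one:
\begin{enumerate}[label=(\alph*)]
\item If $p'\le 1-u<p$, use $C_{p'}(u,v)=C(u,v)=C_{1-u}(u,v)$ and the case above.
\item If $p,p'\le 1-u$, both values equal $C(u,v)$.
\end{enumerate}
So $C_p\le C_{p'}$ pointwise whenever $p\ge p'$. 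The family therefore decreases in concordance order from $C^\uparrow$ to $C_\downarrow$. This is forced: by the Hardy--Littlewood rearrangement inequality, $C^\uparrow$ and $C_\downarrow$ are the pointwise largest and smallest copulas with the given section rearrangements.

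Consequently, item (iii) as printed has the inequality reversed and is false under the standard, pointwise concordance order. For $C=M$ one has $C_0=M$ and $C_1=W$, and $W\ge M$ fails. The correct conclusion of your argument is $C_p\le_{co}C_{p'}$ for $p\ge p'$, equivalently $C_p\ge_{co}C_{p'}$ for $p\le p'$. You should state this, with the $M$-to-$W$ counterexample, rather than leave the sign open. The paper never uses the direction in (iii), so the slip has no downstream consequences.
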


In this context of the interior of the $(\xi,\nu)$-region, we shall also use the elementary observation that $\xi$ is path-continuous.

\begin{lemma}[Path continuity of $\xi$]\label{lem:path_continuity_xi}
Let $C_0$ and $C_1$ be bivariate copulas and define, for $\lambda\in[0,1]$,
\(
C_\lambda:=(1-\lambda)C_0+\lambda C_1.
\)
Then the map \([0,1]\to\R\), \(\lambda\mapsto \xi(C_\lambda)\) is continuous.
\end{lemma}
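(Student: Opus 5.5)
The plan is to write $\xi(C_\lambda)$ as a quadratic polynomial in $\lambda$. Let $h_0:=\partial_1 C_0$ and $h_1:=\partial_1 C_1$, chosen via Lemma~\ref{charSIcop}. These are measurable, take values in $[0,1]$, and exist for a.e.\ $t$ and every $v$. Because differentiation in the first argument is linear, the family
\[
h_\lambda:=(1-\lambda)h_0+\lambda h_1
\]
is a version of $\partial_1 C_\lambda$. Indeed, integrating in $t$ gives $\int_0^u h_\lambda(t,v)\de t=(1-\lambda)C_0(u,v)+\lambda C_1(u,v)=C_\lambda(u,v)$. The convex combination $C_\lambda$ is again a copula, since the conditions of Lemma~\ref{charSIcop} are preserved under convex combinations, so \eqref{eq:xi} applies to it. In any case, $\xi$ only depends on $\partial_1 C$ up to null sets.

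Next we insert $h_\lambda$ into \eqref{eq:xi} and expand the square:
\[
\xi(C_\lambda)=6\Bigl((1-\lambda)^2\!\int\!\!\int h_0^2+2\lambda(1-\lambda)\!\int\!\!\int h_0h_1+\lambda^2\!\int\!\!\int h_1^2\Bigr)-2 .
\]
All three integrals are finite, because $0\le h_0,h_1\le 1$ on $[0,1]^2$, which is a set of finite measure. Hence $\lambda\mapsto\xi(C_\lambda)$ is a polynomial of degree at most two with finite coefficients, and it is therefore continuous on $[0,1]$.

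The only delicate point is that $\partial_1 C_\lambda$ coincides a.e.\ with $(1-\lambda)\partial_1C_0+\lambda\partial_1C_1$. This holds because all three functions are Lipschitz in $u$, so their partial derivatives exist a.e.\ and linearity passes through. We also need that the value of $\xi$ is unaffected by modifying $\partial_1C$ on a null set, which is immediate from the integral form \eqref{eq:xi}. Beyond this, no obstacle is expected.
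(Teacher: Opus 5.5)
Your proof is correct and follows essentially the same route as the paper: both identify $\partial_1 C_\lambda$ a.e.\ with $(1-\lambda)h_0+\lambda h_1$, then expand $\|h_\lambda\|_{L^2}^2$ into a quadratic polynomial in $\lambda$ with finite coefficients, and conclude continuity. Your extra remarks make explicit what the paper leaves implicit, namely that $C_\lambda$ is a copula (via Lemma~\ref{charSIcop}) and that $\xi$ does not depend on the a.e.\ version of the derivative.
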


\begin{proof}
Since the class of bivariate copulas is convex, $C_\lambda$ is again a copula for every $\lambda\in[0,1]$. By linearity of the weak partial derivative,
\[
\partial_1 C_\lambda
=
(1-\lambda)\partial_1 C_0+\lambda \partial_1 C_1
=
(1-\lambda)h_0+\lambda h_1
=
h_\lambda
\]
almost everywhere on $(0,1)^2$. Hence, using that
\(
\xi(C_\lambda)=6\int_0^1\int_0^1 h_\lambda(t,v)^2\de t\de v-2
=6\|h_\lambda\|_{L^2}^2-2,
\)
expanding the square in the Hilbert space $L^2((0,1)^2)$ yields
\[
\|h_\lambda\|_{L^2}^2
=
\|(1-\lambda)h_0+\lambda h_1\|_{L^2}^2 
=
(1-\lambda)^2\|h_0\|_{L^2}^2
+
\lambda^2\|h_1\|_{L^2}^2
+
2\lambda(1-\lambda)\langle h_0,h_1\rangle_{L^2},
\]
where $\langle\cdot,\cdot\rangle_{L^2}$ denotes the usual inner product on
$L^2((0,1)^2)$.
Continuity in $\lambda$ is then immediate.
\end{proof}

Finally, before turning to the proof of Theorem~\ref{thm:region}, we establish the relationship between the derivatives of $\Xi$ and $\NN$ already indicated in \eqref{eq:key-derivative}. 

\begin{lemma}[Key derivative identity]\label{lem:key-derivative}
For all \(b>0\), it holds that
\[
\NN'(b)=\frac{\Xi'(b)}{b}.
\]
\end{lemma}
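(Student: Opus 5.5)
The plan is to avoid differentiating the closed forms \eqref{eq:Xi-formula}--\eqref{eq:N-formula} directly. Instead I will derive the identity structurally, as an envelope-type relation coming from the variational characterization behind Theorem~\ref{thm:KKT}. Write $\Xi(b)=6\iint h_b^2-2$ and $\NN(b)=12\iint (1-t)^2h_b-2$, with $h_b$ from \eqref{eq:clamped_h}.

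\textbf{Step 1: differentiability in $b$.} For fixed $t$, the map $b\mapsto h_b(t,v)$ involves $q=q_b(v)$, which is defined implicitly through Lemma~\ref{lem:unique_q}. I will work in the $q$-parametrization of Lemma~\ref{lem:1D}, or simply note that $b\mapsto h_b$ is Lipschitz in $L^2$. Then $\partial_b h_b$ exists a.e. and is supported on the middle set $M_b:=\{0<h_b<1\}$. There it equals
\[
\partial_b h_b=\bigl((1-t)^2-q_b(v)\bigr)-b\,\partial_b q_b(v)=\frac{h_b}{b}-b\,\partial_b q_b(v).
\]
The marginal constraint $\int_0^1 h_b(t,v)\de t=v$ holds for all $b$. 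Hence $\int_0^1\partial_b h_b(t,v)\de t=0$ for a.e.\ $v$, and differentiation under the integral is justified by dominated convergence exactly as in Lemma~\ref{lem:1D}.

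\textbf{Step 2: compute both derivatives.}
\[
\Xi'(b)=12\iint h_b\,\partial_b h_b,\qquad \NN'(b)=12\iint (1-t)^2\,\partial_b h_b .
\]
Use the stationarity relation \eqref{eq:stat-ref} with $\mu=1/b$, which on $M_b$ reads $(1-t)^2=h_b/b+q_b(v)$. Since $\partial_b h_b=0$ off $M_b$, this gives
\[
\NN'(b)=12\iint_{M_b}\Bigl(\frac{h_b}{b}+q_b(v)\Bigr)\partial_b h_b=\frac{\Xi'(b)}{b}+12\int_0^1 q_b(v)\Bigl(\int_0^1\partial_b h_b\de t\Bigr)\de v=\frac{\Xi'(b)}{b}.
\]
The last integral vanishes by Step 1. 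This is precisely the Lagrangian envelope identity: the $\gamma$-multiplier term drops out because the marginal constraint is preserved, and the box multipliers drop out by complementarity.

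\textbf{Main obstacle.} The main difficulty is the rigorous justification of Step 1. The clamping breakpoints $a(q)$ and $s(q)$ move with $b$, and $q_b(v)$ depends on $b$ implicitly. To get a.e.\ differentiability of $b\mapsto h_b(t,v)$ and a dominating bound, I will apply the implicit function theorem to $\Phi_b(q)=v$. Lemma~\ref{lem:vq} gives $\partial_q\Phi_b=v'(q)<0$ on $(-1/b,1)$, and $\partial_b\Phi_b$ is continuous, so $q_b(v)$ is $C^1$ in $b$ for $v\in(0,1)$. The set where $h_b$ hits a breakpoint has measure zero, so dominated convergence applies.

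As a fallback, or as a cross-check, I will verify the identity directly on $0<b\le1$: $\NN'(b)=\frac{4(28-18b)}{105}$ and $\Xi'(b)/b=\frac{8(14-9b)}{105}$ agree. For $b>1$ I will use the $\gamma$-reparametrization already used in the proof of Theorem~\ref{thm:KKT} for $\Xi'$, and compare it with the analogous computation for $\NN'$.
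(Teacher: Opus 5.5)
Your argument is correct, and it takes a genuinely different route from the paper.

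\textbf{The paper's route.} The paper proves the lemma by brute force. It differentiates the explicit formulas \eqref{eq:Xi-formula}--\eqref{eq:N-formula} directly and checks the polynomial case $0<b\le 1$ by hand. For $b>1$ it writes $\Xi$ and $\NN$ as combinations of $\gamma$, $1$ and $A=\acosh(\sqrt b)$ with rational coefficients. It then verifies that $420b\bigl(\NN'(b)-\Xi'(b)/b\bigr)$ collapses to a multiple of $P_1+105+2b(b-1)P_2$, which vanishes identically.

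\textbf{Your route.} You prove the identity for $b\mapsto\xi(C_b)$ and $b\mapsto\nu(C_b)$ as an envelope relation. On the middle set one has $(1-t)^2=h_b/b+q_b(v)$, and $\partial_b h_b$ vanishes off that set. The $q_b$-term then drops out because $\int_0^1\partial_b h_b\de t=0$, and Theorem~\ref{thm:closed} transfers the result to $\Xi,\NN$.

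\textbf{Comparison.} The paper's route uses only the explicit formulas, so it does not depend on the lengthy integrations behind Theorem~\ref{thm:closed}. It is, however, purely computational and needs a case split at $b=1$. Your route has two costs: it relies on Theorem~\ref{thm:closed}, and it needs an analytic justification of differentiability in $b$. In return, it avoids all case distinctions and gives differentiability of $b\mapsto\xi(C_b),\nu(C_b)$ across $b=1$, which the piecewise differentiation leaves implicit. It also explains the identity: the boundary slope $\NN'/\Xi'=1/b=\mu$ is the Lagrange multiplier of the $\xi$-constraint. That is exactly the structural reason for the concavity of $\psi$ used later in the proof of Theorem~\ref{thm:region}.

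\textbf{A detail to make explicit in Step 1.} Dominated convergence in $(t,v)$ needs a bound on $\partial_b q_b(v)$ that is uniform in $v$. This does not come for free, because $\partial_q\Phi_b\to 0$ as $v\to 0$ or $v\to 1$. The implicit function theorem supplies it directly. Write $M_b^v$ for the $v$-section of the middle set and $|M_b^v|$ for its length. Then $\partial_b\Phi_b(q)=\frac1b\int_{M_b^v}h_b\de t$ and $\partial_q\Phi_b(q)=-b\,|M_b^v|$, so
\[
0\le\partial_b q_b(v)=\frac{\int_{M_b^v}h_b\de t}{b^2\,|M_b^v|}\le\frac{1}{b^2}.
\]
This makes the difference quotients of $b\mapsto h_b$ uniformly bounded on compact $b$-intervals, which is what your argument requires.
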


\begin{proof}
\emph{Case \(0<b\le 1\).}
By Theorem~\ref{thm:closed},
\(
\Xi(b)=\frac{8}{15}b^2-\frac{8}{35}b^3
\)
and
\(
\NN(b)=\frac{16}{15}b-\frac{12}{35}b^2,
\)
hence
\[
\Xi'(b)=\frac{16}{15}b-\frac{24}{35}b^2,
\qquad
\NN'(b)=\frac{16}{15}-\frac{24}{35}b=\frac{\Xi'(b)}{b}.
\]

\smallskip\noindent
\emph{Case \(b>1\).}
Write
\(
\Xi(b)=\frac{1}{210}\Big(U(b)\,\gamma+V(b)+W(b)\,A\Big),\quad
\NN(b)=\frac{1}{420}\Big(\widetilde U(b)\,\gamma+\widetilde V(b)+\widetilde W(b)\,A\Big),
\)
where
\[
\begin{aligned}
&U(b)=183-38b-88b^2+48b^3, &&V(b)=112b^2-48b^3, &&W(b)=-\frac{105}{b},\\
&\widetilde U(b)=\frac{87}{b}+250-376b+144b^2, &&
\widetilde V(b)=448b-144b^2, &&
\widetilde W(b)=-\frac{105}{b^{2}},
\end{aligned}
\]
and
\(
\gamma=\sqrt{\frac{b-1}{b}},
\quad A=\acosh(\sqrt b),
\quad
\gamma'=\frac{1}{2b^{2}\gamma},
\quad A'=\frac{1}{2b\gamma}.
\)
Differentiating,
\[
\Xi'(b)=\frac{1}{210}\Big(U'\gamma+U\gamma'+V'+W'A+WA'\Big),\quad
\NN'(b)=\frac{1}{420}\Big(\widetilde U'\gamma+\widetilde U\gamma'+\widetilde V'
+\widetilde W'A+\widetilde W A'\Big).
\]
Consider
\(
\mathcal D \;:=\; 420b\Big(\NN'(b)-\tfrac{1}{b}\Xi'(b)\Big).
\)
Then
\[
\mathcal D
= b\big(\widetilde U'\gamma+\widetilde U\gamma'+\widetilde V'
+\widetilde W'A+\widetilde W A'\big)
-2\big(U'\gamma+U\gamma'+V'+W'A+W A'\big).
\]
Grouping coefficients then gives
\[
\begin{aligned}
&P_1:=b\widetilde U-2U=-279+326b-200b^2+48b^3,
\qquad P_2:=b\widetilde U'-2U'=76-24b-\frac{87}{b},\\
&P_3:=b\widetilde V'-2V'=0,\qquad
P_4:=b\widetilde W'-2W'=0,\qquad
P_5:=b\widetilde W-2W=\frac{105}{b}.
\end{aligned}
\]
Hence
\(
\mathcal D
= P_1\,\gamma' + P_2\,\gamma + P_5\,A'.
\)
Using \(\gamma'=\tfrac{A'}{b}\) and \(\gamma^2=\tfrac{b-1}{b}\),
\[
\mathcal D
= \frac{P_1+105}{2b^{2}\gamma} + P_2\,\gamma
= \frac{1}{\gamma}\,\frac{(P_1+105)+2b(b-1)P_2}{2b^{2}}.
\]
A direct computation gives
\(
P_1+105+2b(b-1)P_2=0,
\)
hence \(\mathcal D=0\), i.e.
\(
\NN'(b)=\Xi'(b)/b
\)
for \(b>1\).
\end{proof}

With the above lemmas, the explicit formulas from Theorem~\ref{thm:closed} and the optimization result from Theorem~\ref{thm:KKT} in hand, we can now complete the proof of the exact attainable region.

\begin{proof}[Proof of Theorem~\ref{thm:region}]
First, choose a bivariate SI copula \(C\) with \(\xi(C)=1\) and \(\nu(C)=1\), for instance \(C=M\).
By the Shuffling Lemma~\ref{lemshuff}, there exists a continuous path \((C_p)_{p\in[0,1]}\) of copulas in the uniform topology from \(C_0=M\) to \(C_1=W\) along which the \(\partial_1\)-section structure is preserved.
In particular, \(\xi(C_p)=1\) for all \(p\in[0,1]\), while \(\nu(C_p)\) varies continuously from \(1\) to \(-1\).
Consequently, the entire vertical segment $\{(1,y): -1\le y\le 1\}$ is attainable.
Next, fix \(b>0\). By Theorem~\ref{thm:closed},
\[
\xi(C_b)=\Xi(b),\qquad \nu(C_b)=\NN(b),
\]
so \(C_b\) attains the upper boundary point \((\Xi(b),\NN(b))\).
By the reflection property \eqref{eq:nu-sigma2} and the invariance of \(\xi\) under \(C\mapsto C^{\sigma_2}\), the reflected copula \(C_{-b}=C_b^{\sigma_2}\) satisfies
\[
\xi(C_{-b})=\Xi(b),\qquad \nu(C_{-b})=-\NN(b),
\]
and therefore attains the lower boundary point \((\Xi(b),-\NN(b))\).
Consider a copula $C^\ast$ that attains the point $(1,\NN(b))$, and for $\alpha\in[0,1]$ define the convex combination
\[
C_\alpha = \alpha C_b + (1-\alpha) C^\ast.
\]
Then, by linearity of \(\nu\) in the copula argument (cf.~\eqref{eq:nu}), it is
\[
\nu(C_\alpha) = \alpha \nu(C_b) + (1-\alpha)\nu(C^\ast) = \alpha \NN(b) + (1-\alpha)\NN(b) = \NN(b).
\]
for all \(\alpha\in[0,1].\)
Further, note that \(\xi\) varies continuously in $\alpha$ by Lemma~\ref{lem:path_continuity_xi}, so that each point between \((\Xi(b),\NN(b))\) and \((1,\NN(b))\) is attained.
Hence, for every \(b>0\), the horizontal segment from \((\Xi(b),\NN(b))\) to \((1,\NN(b))\) is attainable.
By reflection, the same holds for the horizontal segment from \((\Xi(b),-\NN(b))\) to \((1,-\NN(b))\).

We note that both \(\Xi\) and \(\NN\) are strictly increasing on \((0,\infty)\).
For \(\Xi\), this follows from the proof of Theorem~\ref{thm:KKT}: writing \(b=1/\mu\), the map
\(
\mu\mapsto \Xi(1/\mu)
\)
is strictly decreasing on \((0,\infty)\), hence \(b\mapsto \Xi(b)\) is strictly increasing.
For \(\NN\), suppose to the contrary that there exist \(b_1<b_2\) with \(\NN(b_1)\ge \NN(b_2)\).
Consider
\[
C_\lambda:=(1-\lambda)C_{b_1}+\lambda M,
\qquad \lambda\in[0,1].
\]
By convexity of \(\xi\) along such mixtures, there exists \(\lambda_*\in(0,1)\) such that
\(
\xi(C_{\lambda_*})=\Xi(b_2).
\)
By linearity of \(\nu\),
\[
\nu(C_{\lambda_*})
=(1-\lambda_*)\NN(b_1)+\lambda_*\nu(M)
\ge (1-\lambda_*)\NN(b_2)+\lambda_*
> \NN(b_2),
\]
since \(\nu(M)=1\) and \(C_{b_2}\neq M\).
This contradicts Theorem~\ref{thm:KKT}, which identifies \(C_{b_2}\) as the unique maximizer of \(\nu\) at the level \(\xi=\Xi(b_2)\).
Hence \(\NN\) is strictly increasing.

By Theorem~\ref{thm:KKT}, the copula \(C_b\) uniquely maximizes \(\nu\) among all copulas with \(\xi=\Xi(b)\).
Hence the largest attainable \(\nu\)-value at \(x=\Xi(b)\) equals \(\NN(b)\).
By the reflection property \eqref{eq:nu-sigma2}, the smallest attainable \(\nu\)-value equals \(-\NN(b)\).
Together with the horizontal-segment construction above, this shows that the vertical section of \(\RR_{\xi,\nu}\) at \(x=\Xi(b)\) is exactly
\(
[-\NN(b),\NN(b)].
\)
Since $\Xi(b)$ is continuous and strictly increasing on $(0,\infty)$, with
\[
\lim_{b\downarrow 0}\Xi(b)=0,
\qquad
\lim_{b\uparrow\infty}\Xi(b)=1,
\]
it attains every value in $(0,1)$.
Together with the conventions $\Xi(0)=\NN(0)=0$ and $\Xi(\infty)=\NN(\infty)=1$, we conclude that
\begin{align}\label{eq:region-representation}
    \RR_{\xi,\nu}
    =\Bigl\{\,\bigl(\Xi(b),y\bigr):\ -\NN(b)\le y\le \NN(b),\ b\in[0,\infty]\Bigr\}.
\end{align}

\smallskip
\emph{Closedness and convexity.}
By \eqref{eq:region-representation}, together with the endpoint conventions
\(\Xi(0)=\NN(0)=0\) and \(\Xi(\infty)=\NN(\infty)=1\),
and continuity of \(\Xi\) and \(\NN\) on \((0,\infty)\) from Theorem~\ref{thm:closed},
it follows that \(\RR_{\xi,\nu}\) is closed.
To prove convexity, define
\[
\psi(x):=\NN(\Xi^{-1}(x)),\qquad x\in[0,1].
\]
Since \(\Xi\) is continuous and strictly increasing on \((0,\infty)\), this is well defined on \((0,1)\) and extends continuously to \([0,1]\) by \(\psi(0)=0\), \(\psi(1)=1\). Thus,
\[
\RR_{\xi,\nu}=\{(x,y)\in[0,1]\times\R: |y|\le \psi(x)\}.
\]
It therefore suffices to show that \(\psi\) is concave on \([0,1]\). Writing \(x=\Xi(b)\), the chain rule and Lemma~\ref{lem:key-derivative} give
\[
\psi'(x)
=
\frac{\NN'(b)}{\Xi'(b)}
=
\frac{1}{b}.
\]
Since \(b=\Xi^{-1}(x)\) is strictly increasing in \(x\), the function \(x\mapsto 1/b = 1/\Xi^{-1}(x)\) is strictly decreasing on \((0,1)\).
Hence \(\psi'\) is strictly decreasing, and therefore \(\psi\) is strictly concave on \([0,1]\).
Finally, the hypograph of a concave function is convex, and since \(\RR_{\xi,\nu}\) is exactly the symmetric hypograph
\[
\RR_{\xi,\nu}=\{(x,y): -\psi(x)\le y\le \psi(x)\},
\]
the set \(\RR_{\xi,\nu}\) is convex.

\smallskip
\smallskip
\emph{Maximal difference \(\nu-\xi\).}
Set \(\Delta(b):=\NN(b)-\Xi(b)\).
By Lemma~\ref{lem:key-derivative}, for all \(b>0\),
\[
\Delta'(b)
= \NN'(b)-\Xi'(b)
= \frac{\Xi'(b)}{b}-\Xi'(b)
= \Xi'(b)\!\left(\frac{1}{b}-1\right).
\]
The strict monotonicity of \(\Xi\) follows from $\Xi'>0$, which was already established in the proof of Theorem~\ref{thm:KKT}.
Hence
\[
\Delta'(b)\begin{cases}
>0, & 0<b<1,\\
=0, & b=1,\\
<0, & b>1,
\end{cases}
\]
so \(\Delta\) is strictly increasing on \((0,1]\) and strictly decreasing on \([1,\infty)\).
Therefore, \(\Delta\) attains its unique global maximum at \(b=1\), with value
\[
\Delta(1)
=\NN(1)-\Xi(1)
=\frac{76}{105}-\frac{32}{105}
=\frac{44}{105}.
\qedhere
\]
\end{proof}

\bibliographystyle{plainnat}
\bibliography{LiteratureXiNu}
\end{document}